\newtheorem{thm}{Theorem}
\newtheorem{cor}[thm]{Corollary}
\newtheorem{lem}[thm]{Lemma}
\newtheorem{prop}[thm]{Proposition}
\theoremstyle{mydefinition}
\theoremstyle{myremark}
\def\increasingS{\textrm{is}}
\def\crossingN{\textrm{cr}}
\def\bdelta{\bar{\delta}}
\def\mb{\mathbf}
\def\NN{\mathbb{N}}
\def\CC{\mathbb{C}}
\def\ZZ{\mathbb{Z}}
\def\ct{\mathop{\mathrm{CT}}}
\def\blambda{\bar{\lambda}}
\def\bmu{\bar{\mu}}
\def\bzero{\bar{\mathbf{0}}}
\newcommand{\qfac}[1]{(q)_n}
\def\sss{\mathfrak{S}}
\title[Determinant Formulas Relating to Tableaux of Bounded Height]{Determinant Formulas Relating to\\ Tableaux of Bounded Height}
\author{Guoce Xin}
\address{Center for
Combinatorics, LPMC, Nankai University, Tianjin 300071, P. R. China}
\email{gxin@nankai.edu.cn}
\date{January 31, 2006}
\begin{document}

\maketitle

\begin{abstract}
Chen et al. recently established bijections for $(d+1)$-noncrossing/
nonnesting matchings, oscillating tableaux of bounded height $d$,
and oscillating lattice walks in the $d$-dimensional Weyl chamber.
Stanley asked what is the total number of such tableaux of length
$n$ and of any shape. We find a determinant formula for the
exponential generating function. The same idea applies to prove
Gessel's remarkable determinant formula for permutations with
bounded length of increasing subsequences. We also give short
algebraic derivations for some results of the reflection principle.
\end{abstract}

{\small \emph{Mathematics Subject Classification}. Primary 05A15,
secondary   05A18, 05E10.}

{\small \emph{Key words}. Young tableau, oscillating tableau,
matching, crossing, lattice path}

\section{Introduction}

For a partition $\lambda=(\lambda_1,\dots,\lambda_d)_{\ge}$ of
length (or height) at most $d$, we associate it with a $\blambda
:=\lambda+(d,d-1,\dots,1)$. Then $\blambda$ belongs to the
$d$-dimensional \emph{Weyl chamber} defined by $W^d=\{\,(x_1,\dots,
x_d): x_1
> \dots
> x_d>0, x_i \in \ZZ \, \}$. In particular, we denote by $\bar{\textbf{0}}=(d,d-1,\dots,1)$
the associate of the empty partition $ \varnothing$.
For $\blambda,\bmu \in W^d$, let $b_n(\blambda;\bmu)$ be the number
of \emph{Weyl oscillating lattice walks} of length $n$, from
$\blambda$ to $\bmu$, staying within $W^d$, with steps
\emph{positive} or \emph{negative} unit coordinate vectors.
\begin{thm}[Grabiner-Magyar \cite{Grabiner-Magyar}, Equation
26] \label{t-Grabiner-Magyar} For fixed $\blambda,\bmu \in W^d$, we
have a determinant formula for the exponential generating function:
\begin{equation}
\label{e-starting-ending-walks} g_{\blambda\bmu}(t)=\sum_{n\ge 0}
b_n(\blambda;\bmu) \frac{t^n}{n!}=\det\left(
I_{\bmu_i-\blambda_j}(2t)-I_{\bmu_i+\blambda_j}(2t) \right)_{1\le
i,j \le d},
\end{equation}
where \begin{equation}\label{e-Bessel} I_s(2t)=[z^s]
\exp(t(z+z^{-1}))=\sum_{n\ge 0 }\frac{1}{n!(n+s)!}t^{2n+s}
\end{equation} is the hyperbolic Bessel function of
the first kind of order $s$.
\end{thm}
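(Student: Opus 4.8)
The plan is to reduce the confined count $b_n(\blambda;\bmu)$ to unconstrained lattice walks via the reflection principle of Gessel and Zeilberger, and then to package the resulting signed group sum as a determinant. The first ingredient is the unconstrained building block. If one drops the requirement of staying in $W^d$ and merely counts length-$n$ walks in $\ZZ^d$ from a point $a$ to a point $b$ with steps $\pm e_i$, then grouping by how the $n$ steps are distributed among the $d$ coordinates and interleaving them shows that the exponential generating function factors coordinatewise: a single coordinate moving by $\pm 1$ with net displacement $s$ has EGF $\sum_k \frac{t^k}{k!}\binom{k}{(k+s)/2}=I_s(2t)$, which is exactly the series in \eqref{e-Bessel}, and the $\frac{t^n}{n!}$ weighting turns interleaving into a product. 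Hence the full unconstrained EGF from $a$ to $b$ equals $\prod_{i=1}^d I_{b_i-a_i}(2t)$, equivalently $[z_1^{b_1-a_1}\cdots z_d^{b_d-a_d}]\exp\bigl(t\sum_i(z_i+z_i^{-1})\bigr)$.

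Next I would invoke the reflection principle. The open chamber $W^d$ is the fundamental domain of the Weyl group of type $B_d$, the group of signed permutations acting by permuting coordinates and flipping signs; crucially the step set $\{\pm e_i\}$ is invariant under this group, and $\blambda,\bmu$ lie in its interior. The standard sign-reversing involution — reflect the portion of a wall-touching walk up to its first contact with a wall — then cancels all walks that ever meet a wall and yields
\[
b_n(\blambda;\bmu)=\sum_{\sigma}(\det\sigma)\,N_n\bigl(\sigma\blambda\to\bmu\bigr),
\]
where $\sigma$ runs over the signed permutations, $\det\sigma$ is its sign as an orthogonal map, and $N_n$ is the unconstrained count above.

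Finally I would reorganize the group sum. Writing $\sigma$ as a pair $(\pi,\epsilon)$ with $\pi\in S_d$ and $\epsilon\in\{\pm1\}^d$, so that $(\sigma\blambda)_i=\epsilon_i\blambda_{\pi(i)}$ and $\det\sigma=\mathrm{sgn}(\pi)\prod_i\epsilon_i$, I multiply by $t^n/n!$, sum over $n$, and apply the coordinatewise factorization to get
\[
g_{\blambda\bmu}(t)=\sum_{\pi\in S_d}\mathrm{sgn}(\pi)\prod_{i=1}^d\Bigl(I_{\bmu_i-\blambda_{\pi(i)}}(2t)-I_{\bmu_i+\blambda_{\pi(i)}}(2t)\Bigr),
\]
since the independent sum over $\epsilon$ factors through the product and the $\epsilon_i=-1$ term supplies the minus sign. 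The right-hand side is precisely the Leibniz expansion of $\det\bigl(I_{\bmu_i-\blambda_j}(2t)-I_{\bmu_i+\blambda_j}(2t)\bigr)_{1\le i,j\le d}$, which is \eqref{e-starting-ending-walks}.

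The step I expect to be the real obstacle is the reflection principle itself: one must check that reflecting the walk-prefix at its first wall-hit is genuinely well defined and sign-reversing — handling walks that touch several walls at once, or the corner where $x_d=0$ meets some $x_i=x_{i+1}$ — and that, because the endpoints stay in the open chamber, no fixed points survive the involution. Everything else, namely the coordinatewise factorization and the collapse of the signed group sum into a determinant, is routine bookkeeping once that cancellation is secured. As an alternative organization of the same content, one can instead read $b_n(\blambda;\bmu)$ as the count of $d$ non-colliding walkers on $\ZZ_{>0}$: the single-walker positivity constraint produces the reflected kernel $I_{\bmu_i-\blambda_j}(2t)-I_{\bmu_i+\blambda_j}(2t)$ by the one-dimensional reflection principle, and the non-collision constraint produces the determinant by the Karlin--McGregor/Lindström--Gessel--Viennot mechanism.
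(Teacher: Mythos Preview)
Your argument is correct, but it takes a different route from the paper. You prove the formula by the classical Gessel--Zeilberger reflection principle: compute the unconstrained EGF as $\prod_i I_{b_i-a_i}(2t)$, apply the sign-reversing involution for the type-$B_d$ Weyl group, and collapse the resulting signed sum over signed permutations into the Leibniz expansion of the desired determinant. The paper instead gives what it calls an \emph{algebraic} derivation: it writes down the rational function
\[
B_\lambda(x;t)=\frac{\det(x_i^{\lambda_j}-x_i^{-\lambda_j})}{1-t\sum_i(x_i+x_i^{-1})}
\]
and verifies directly (Proposition~\ref{p-Bxt}) that its coefficients satisfy the same initial condition, wall-vanishing conditions, and one-step recursion as $b_n(\lambda;\mu)$, so the two agree on the closure of $W^d$; extracting $[x^\mu]$ after passing to the exponential generating function then yields the determinant. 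Your approach is conceptually transparent and explains \emph{why} the antisymmetrized numerator appears, at the cost of the delicate involution check you flag (simultaneous wall hits, well-definedness of the first reflection). The paper's approach sidesteps that combinatorial verification entirely---the antisymmetry of $B_\lambda$ under $x_i\leftrightarrow x_{i+1}$ and $x_d\mapsto x_d^{-1}$ does all the work---and is what makes the method portable to the other determinant formulas in the paper, where one wants to manipulate such constant-term expressions algebraically rather than bijectively.
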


Chen et al. \cite{chen} recently established bijections showing that
$(d+1)$-noncrossing (nonnesting) matchings and oscillating tableaux
are in bijection with certain Weyl oscillating lattice walks. Then
Stanley asked (by private communication) the following question: How
many Weyl oscillating lattice walks of length $n$ are there if we
start at $\bzero$ but may end anywhere? Our main result answers this
question:

\begin{thm}\label{t-main} The exponential generating function for
the number of oscillating lattice walks in ${W}^d$ starting at
$\bzero=(d,d-1,\dots,1)$, and with no restriction on the end points
is
\begin{align} \label{e-maintheorem}
G(t):=\sum_{n\ge 0} \sum_{\mu\in W^d} b_n(\bzero;\mu)\frac{t^n}{n!}=
\det( J_{i-j}(2t) )_{1\le i, j\le d},\end{align} where $J_s(2t)=
[z^{s}]\ (1+z) \exp ((z+z^{-1})t)=I_s(2t)+I_{s-1}(2t)$.
\end{thm}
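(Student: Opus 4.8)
The plan is to convert the sum over all shapes into a single constant-term evaluation and to isolate the one-line cancellation responsible for the combination $J_s=I_s+I_{s-1}$. Throughout write $E(z)=\exp(t(z+z^{-1}))$, so that $I_s(2t)=[z^s]E(z)$ by \eqref{e-Bessel}, and recall $\bzero_j=d+1-j$. First I would rewrite the summand of the Grabiner--Magyar formula (Theorem~\ref{t-Grabiner-Magyar}) for $\blambda=\bzero$ by coefficient extraction: since $I_{\bmu_i-\bzero_j}-I_{\bmu_i+\bzero_j}=[z_i^{\bmu_i}](z_i^{\bzero_j}-z_i^{-\bzero_j})E(z_i)$, pulling the coefficient operators out of the determinant gives
\begin{equation*}
g_{\bzero\bmu}(t)=[z_1^{\bmu_1}\cdots z_d^{\bmu_d}]\,\Phi(z),\qquad \Phi(z):=\prod_{i=1}^d E(z_i)\cdot D(z),\quad D(z):=\det\!\big(z_i^{\bzero_j}-z_i^{-\bzero_j}\big)_{1\le i,j\le d}.
\end{equation*}
Here $D$ is a type-$C$ alternant, so $\Phi$ is antisymmetric both under permuting the $z_i$ and under each inversion $z_i\mapsto z_i^{-1}$ (the latter because $E(z)=E(z^{-1})$ while $D$ changes sign); that is, $\Phi$ is alternating for the hyperoctahedral group $B_d$.

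Next I would carry out the sum over $\bmu\in W^d$ as a constant term. Since $g_{\bzero\bmu}=\ct_z[\Phi(z)\prod_i z_i^{-\bmu_i}]$, summing over the chamber gives $G(t)=\ct_z[\Phi(z)\Delta(z)]$ with $\Delta(z)=\sum_{\bmu\in W^d}\prod_i z_i^{-\bmu_i}$. Parametrising $W^d$ by the gaps $g_i=\bmu_i-\bmu_{i+1}\ge1$ (with $\bmu_{d+1}:=0$), so that $\bmu_i=\sum_{l\ge i}g_l$, the sum factors into geometric series,
\begin{equation*}
\Delta(z)=\prod_{l=1}^d\frac{1}{z_1z_2\cdots z_l-1}.
\end{equation*}
The interchange of $\ct$ with the infinite sum is legitimate because, $\bmu$ being far from $\bzero$ forcing high $t$-order, each power of $t$ receives only finitely many contributions.

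The heart of the argument is to simplify $D(z)\Delta(z)$. The source of the shift $J_s=I_s+I_{s-1}$ is the one-variable identity $\dfrac{z-z^{-1}}{z-1}=z^{-1}(1+z)$, which converts an odd factor of $D$ against a staircase denominator into precisely the factor $(1+z)$ that turns $I_s$ into $I_s+I_{s-1}$. More generally I would use the factorization $D=\pm\prod_i(z_i-z_i^{-1})\prod_{i<k}\big((z_k+z_k^{-1})-(z_i+z_i^{-1})\big)$ and telescope the denominators $z_1\cdots z_l-1$ against these differences, via identities such as $z_1+z_1^{-1}-z_2-z_2^{-1}=(z_1z_2-1)(z_2^{-1}-z_1^{-1})$. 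For $d=2$ this already collapses the product to $z_1^{-1}(1+z_1)(z_2-z_2^{-1})(z_2^{-1}-z_1^{-1})$, whose constant term against $E(z_1)E(z_2)$ equals $(I_0+I_1)(I_0-I_2)=\det(J_{i-j})_{2\times2}$, confirming the mechanism.

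Finally I would match the outcome to the claimed determinant by recognising its own constant-term form. Since $J_{i-j}=\ct_z[z^{j-i}(1+z)E(z)]$, the same coefficient-pulling as in the first step yields
\begin{equation*}
\det\!\big(J_{i-j}\big)_{1\le i,j\le d}=\ct_z\Big[\prod_{i=1}^d z_i^{\,1-i}(1+z_i)E(z_i)\cdot V(z)\Big],\qquad V(z)=\prod_{1\le i<k\le d}(z_k-z_i),
\end{equation*}
so it remains to see that the simplified $\Phi\Delta$ has exactly this constant term. The main obstacle is this last step for general $d$: controlling signs and making the staircase cancellation uniform. I expect the cleanest rigorous route is either an induction on $d$, or to replace $\Delta$ by its $B_d$-antisymmetrisation $\sum_{w\in B_d}\mathrm{sgn}(w)\,w\Delta$ — legitimate because $\Phi$ is $B_d$-alternating and $\ct$ is $B_d$-invariant, so $\ct[\Phi\,w\Delta]=\mathrm{sgn}(w)\,\ct[\Phi\Delta]$ — and then to evaluate the antisymmetrised staircase sum in closed product form; comparison with the displayed Toeplitz integrand finishes the proof.
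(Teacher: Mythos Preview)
Your setup matches the paper's exactly: write $g_{\bzero\bmu}(t)$ as a coefficient of $\Phi(z)=\prod_iE(z_i)\cdot D(z)$ and then sum over the chamber to get a single constant term. Your $d=2$ computation is correct and correctly isolates the mechanism producing the factor $(1+z)$.

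The genuine gap is the general-$d$ step, which you yourself flag as ``the main obstacle''. Direct telescoping of $D(z)\Delta(z)$ with $\Delta(z)=\prod_{l}(z_1\cdots z_l-1)^{-1}$ does not extend cleanly: $\Delta$ is far from symmetric, and beyond $d=2$ the factor pairings $z_i+z_i^{-1}-z_k-z_k^{-1}=(z_iz_k-1)(z_i-z_k)/z_iz_k$ no longer match the nested denominators $z_1\cdots z_l-1$ one-to-one, so there is no uniform cancellation to organise. Your fallback of $B_d$-antisymmetrising $\Delta$ is legal for the reason you give, but you do not supply the closed form of $\sum_{w\in B_d}\mathrm{sgn}(w)\,w\Delta$, and without it the argument is not a proof.

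The paper closes this gap with a much lighter symmetrisation: only $\sss_d$ (the Stanton--Stembridge trick), not $B_d$. Antisymmetrising $\sum_{\mu\in W^d}x^\mu$ over $\sss_d$ turns it into $a_{\bdelta}(x)\sum_{\lambda}s_\lambda(x)$, and the classical identity
\[
\sum_{\lambda}s_\lambda(x)=\prod_i(1-x_i)^{-1}\prod_{i<j}(1-x_ix_j)^{-1}
\]
gives an immediate product form. This cancels exactly against the factorisation $D=\prod_i(1-x_i^2)\prod_{i<j}(1-x_ix_j)\,a_{\bdelta}(x^{-1})$ of the type-$C$ alternant (which you quote), leaving precisely $\prod_i(1+x_i)\,a_{\bdelta}(x)\,a_{\bdelta}(x^{-1})$. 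A reverse application of the SS-trick then packages this as the Toeplitz constant term you wrote down for $\det(J_{i-j})$. So the missing idea is not the symmetrisation trick itself but the recognition that $\sss_d$ suffices and that the resulting sum is governed by the Schur-sum identity; once that is in hand, the cancellation is one line for every $d$.
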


Terminologies not presented here will be given in section 2, where
we will explore the connection of oscillating tableaux with the
Brauer algebra and symplectic group, just as that of standard Young
tableaux (SYTs for short) with the symmetric group and general
linear group. We will see that Theorem \ref{t-main} actually gives a
determinant formula for oscillating tableaux of bounded height,
which is an analogy of Gessel's formula for SYTs of bounded height.

Section 3 is for completeness of section 4, but is of some
independent interest. We describe a simple algebraic derivation of
the hook-length formula (Theorem \ref{t-HLF}) and Theorem
\ref{t-Grabiner-Magyar}, as well as some notations. The method is
easily seen to apply to many other results of the reflection
principle. One can see from the proof a reason why using exponential
generating function is preferable in this context.

Section 4 includes the derivation of Theorem \ref{t-main}. Starting
from the Grabiner-Magyar formula, one can obtain a constant term
expression that can be used to do algebraic calculation. The theorem
is then derived in three key steps: we first apply the
Stanton-Stembridge trick (a kind of symmetrization), then a
classical  formula for symmetric functions, and finally reversely
apply the Stanton-Stembridge trick. The same idea applies to prove a
generalized form (Theorem \ref{t-Gen-Gessel}) of Gessel's remarkable
determinant formula \cite{Gessel}:
\begin{thm}[Gessel]\label{t-Gessel}
Let $u_d(n)$ be the number of permutations on $\{1,2,\dots,n\}$ with
longest increasing subsequences of length at most $d$. Then
 \begin{equation} \sum_{n\ge 0} u_d(n) \frac{t^2}{n!^2}
=\det(I_{i-j}(2t) )_{1\le i,j \le d}.
\end{equation}\end{thm}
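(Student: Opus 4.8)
The plan is to prove Gessel's formula by connecting permutations with bounded increasing subsequences to lattice walks, exactly paralleling the oscillating-tableau story but using only \emph{positive} steps. The starting point is the classical fact, via the Robinson--Schensted correspondence, that a permutation of $\{1,\dots,n\}$ with longest increasing subsequence of length at most $d$ corresponds to a pair $(P,Q)$ of standard Young tableaux of the same shape $\lambda$ with at most $d$ rows and $n$ cells. Hence $u_d(n)=\sum_{\lambda\vdash n,\ \ell(\lambda)\le d} f_\lambda^2$, where $f_\lambda$ is the number of SYTs of shape $\lambda$. The appearance of the squared factorial $n!^2$ in the generating variable $t^2/n!^2$ is precisely what one expects when each of $P$ and $Q$ contributes a factor $f_\lambda/n!$, so the natural object to encode is $\sum_\lambda (f_\lambda/n!)^2 t^{2n}$.

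\medskip

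First I would reinterpret a single SYT of height $\le d$ as a lattice walk. A SYT of shape $\lambda$ is an increasing sequence of partitions $\varnothing=\lambda^{(0)}\subset\lambda^{(1)}\subset\dots\subset\lambda^{(n)}=\lambda$, each obtained by adding one box; passing to associates $\blambda^{(k)}=\lambda^{(k)}+(d,\dots,1)$, this is a lattice walk in $W^d$ using only \emph{positive} unit-coordinate steps, starting at $\bzero$. The number $f_\lambda$ is then the number of such walks from $\bzero$ to $\blambda$. The key hook-length/reflection input (Theorem~\ref{t-HLF} and Theorem~\ref{t-Grabiner-Magyar}, specialized to positive steps only) should give a clean determinant or constant-term expression for $f_\lambda/n!$, namely a product over a single Bessel-type generating series $\sum_n t^n/n!$, which is just $e^t$ in each coordinate before the reflection determinant is applied. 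Squaring and summing over $\lambda$ then amounts to pairing two independent positive-step walks ending at the same point $\blambda$, which by a standard reversal is equivalent to a single walk from $\bzero$ to $\bzero$ of length $2n$ using positive \emph{and} negative steps --- and this is exactly the oscillating-walk setup governed by the Grabiner--Magyar determinant in Theorem~\ref{t-Grabiner-Magyar}.

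\medskip

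The technical heart, following the paper's announced strategy, is to run the same three-step constant-term manipulation used for Theorem~\ref{t-main}. Starting from $\sum_\lambda (f_\lambda/n!)^2$ written as a constant term in $d$ variables (with one positive-step exponential factor per coordinate for each copy), I would apply the Stanton--Stembridge symmetrization trick to replace the determinantal reflection signs by a full symmetrization over the hyperoctahedral or symmetric group, collapsing the double sum over interior partition data. Then I would invoke the relevant classical symmetric-function identity (the Cauchy-type identity for the Bessel/Schur expansion) to perform the summation over $\lambda$ in closed form, producing a single product of generating series. Finally I would reverse the Stanton--Stembridge trick to recover a determinant, which should emerge as $\det(I_{i-j}(2t))_{1\le i,j\le d}$, matching the diagonal-to-diagonal structure because both copies start and end at $\bzero$.

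\medskip

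The main obstacle I anticipate is bookkeeping the \emph{two} independent positive-step walks correctly so that their combination reproduces the symmetric oscillating walk with the right indices $I_{i-j}$ rather than $I_{i+j}$ or a mixture. Concretely, the reflection principle for walks confined to $W^d$ naturally yields both $I_{\mu_i-\lambda_j}$ and $I_{\mu_i+\lambda_j}$ terms (as in Theorem~\ref{t-Grabiner-Magyar}); the delicate point is showing that summing over all endpoints $\mu=\lambda$ and then over $\lambda$ causes the ``$+$'' reflected terms to cancel or reorganize, leaving only the translation-invariant $I_{i-j}$ determinant. Verifying that this cancellation is exactly what the reversed Stanton--Stembridge step accomplishes --- and checking the $t\mapsto t^2$ normalization that converts $n!^2$ in the denominator into the Bessel argument $2t$ --- is where the real care is required; the rest is the same machine already built for Theorem~\ref{t-main}.
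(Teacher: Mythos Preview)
Your proposal contains a genuine conceptual error in the second paragraph. Concatenating two positive-step walks from $\bzero$ to $\bmu$ (reversing one) does \emph{not} produce a general oscillating walk in $W^d$ from $\bzero$ to $\bzero$: it produces only those walks that go up for $n$ steps and then down for $n$ steps, a strict subset. The Grabiner--Magyar formula applied to $b_{2n}(\bzero;\bzero)$ counts \emph{all} oscillating loops, which is the matching number of equation~\eqref{e-2n-1-d}, equal to $[t^{2n}/(2n)!]\det(I_{i-j}-I_{i+j})$ --- not Gessel's $[t^{2n}/n!^2]\det(I_{i-j})$. So the $I_{i+j}$ terms you worry about in your last paragraph are not artifacts to be cancelled; they are exactly what separates the matching count from the permutation count, and no ``summing over endpoints'' will remove them.

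The paper's actual argument never passes through oscillating walks or through Theorem~\ref{t-Grabiner-Magyar}. Since positive-step walks starting at $\bdelta$ can never reach the wall $x_d=0$, only the hyperplanes $x_i=x_{i+1}$ matter, and Proposition~\ref{p-Fx} expresses $f(\bdelta;\mu)\,t^n/n!$ as a constant term involving only the alternant $a_{\bdelta}(x)$, with no $x_i^{-\bdelta_j}$ reflections at all. The key move you are missing is to introduce \emph{two independent sets of variables} $x_1,\dots,x_d$ and $y_1,\dots,y_d$, one for each tableau in the pair. After the SS-trick in both alphabets, the sum over the common endpoint $\mu\in W^d$ becomes $\sum_{\mu}a_\mu(x)a_\mu(y)$, which is evaluated by the Cauchy--Binet identity $\sum_{\mu\in W^d} a_\mu(x)a_\mu(y)=x^{\mb{1}}y^{\mb{1}}\det\bigl(1/(1-x_iy_j)\bigr)$ --- not the single-alphabet Schur sum \eqref{e-subs-sum} used for Theorem~\ref{t-main}. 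Reversing the SS-trick on this determinant and computing each entry's constant term yields $I_{\bdelta_i-\bdelta_j}(2t)=I_{i-j}(2t)$ directly; no $I_{i+j}$ term ever appears.
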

Our starting point is the well-known hook-length formula.

%
%

\section{Notations, connections, and applications}
In this section, we will introduce many objects and try to explain
their connections with the classical objects for the symmetric
group, in the view of enumeration. Some of the connections are in
\cite[Section 9]{Stanley_Increasing}, whose notations we shall
closely follow. Finally, we will give some applications of Theorem
\ref{t-main}.

\subsection{Notations}
We assume basic knowledge of the symmetric group $\mathfrak{S}_n$
and its representation. See, e.g., \cite[Chapter 7]{EC2}. Now we
introduce some objects.

The \emph{Brauer algebra} $\mathfrak{B}_n$ (depending on a parameter
$x$ which is irrelevant here) is a certain semisimple algebra with
the underlying space the linear span (say over $\CC$) of (complete)
matchings on $[2n]=\{1,2,\dots, 2n\}$. The dimension of
$\mathfrak{B}_n$ is $$\dim
\mathfrak{B}_n=(2n-1)!!=(2n-1)(2n-3)\cdots 3\cdot 1.$$ Its
irreducible representations are indexed by partitions of $n-2r$, for
$0\le r\le \lfloor n/2 \rfloor$. The dimension of the irreducible
representation $\mathfrak{B}^\mu$ is equal to $\tilde{f}^{\mu}_n$,
that we are going to introduce.

An \emph{oscillating tableau} (or \emph{up-down tableau}) of shape
$\mu$ and length $n$ is a sequence
$(\varnothing=\mu^0,\mu^1,\dots,\mu^{n}=\mu)$ of partitions such
that for all $1\le i\le n -1$, the diagram of $\mu^i$ is obtained
from $\mu^{i-1}$ by either adding or removing one square.
 Denote by $\tilde{f}^{\mu}_n$ the number of such
tableaux. It is known that if $\mu$ is a partition of $n-2r$ for
some nonnegative integer $r$, then
$$\tilde{f}^{\mu}_n = \binom{n}{2r}(2r-1)!! f^{\mu}, \qquad \mu \vdash (n-2r), $$
where $f^\mu$ is the number of standard Young tableaux of shape
$\mu$. See, e.g., \cite[Appendix B6]{Barcelo-Ram} for further
information.

Denote by $\mathfrak{M}_n$ the set of matchings on $[2n]$. A
\emph{matching} $M\in \mathfrak{M}_n$ is a partition of $[2n]$ into
$n$ two-blocks, written in the form $\{\{i_1,j_1\},\dots,
\{i_n,j_n\} \}$. We also write $(i_k,j_k)$ for $\{i_k,j_k\}$ if
$i_k<j_k$. We represent $M$ by a diagram obtained by identifying $i$
with $(i,0)$ in the plane for $i=1,\dots, 2n$, and drawing arcs,
called edges, from $i_k$ to $j_k$ above the horizontal $x$-axis for
all $k$. For $d\ge 2$, a $d$-crossing of a matching $M$ is a set of
$d$ arcs $(i_{r_1},j_{r_1}),
(i_{r_2},j_{r_2}),\dots,(i_{r_d},j_{r_d})$ of $M$ such that
$i_{r_1}<i_{r_2}<\cdots<i_{r_d}<j_{r_1}<j_{r_2}<\cdots <j_{r_d}$. A
matching without any $d$-crossing is called a $d$-noncrossing
matching. We omit here the similar definition of $d$-nesting. Figure
\ref{f-matching} shows the diagram corresponding to the matching
$$M=\{\{1,4\},\{2,8\},\{3,10\},\{5,7\},\{6,9\} \}. $$

\begin{figure}[hbt]
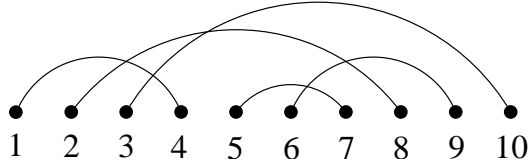

\begin{center}
\input matching.pstex_t
\end{center}
\caption{A matching on [10]\label{f-matching}, in which the edges
$\{1,4\},\{2,8\},\{3,10\}$ form a 3-crossing.}
\end{figure}

\def\pam{\mathrm{bsm}}
Now we introduce apparently new objects. For an oscillating tableau
$O$ of shape $\varnothing$ (hence of even length), reading $O$
backwardly still gives an oscillating tableau of shape
$\varnothing$, denoted by $O^{rev}$. We say that $O$ is
\emph{palindromic} if $O=O^{rev}$. For a matching $M$ of $[2n]$, let
$M^{refl}$ denote the matching obtained from $M$ by reflecting in
the vertical line $x=n+1/2$. Figure \ref{f-matching-rev} shows the
diagram corresponding to $M^{refl}$. Then $M$ is said to be
bilaterally symmetric  if $M=M^{refl}$. Equivalently, $(i,j)$ is an
edge of $M$ if and only if so is $(2n+1-j,2n+1-i)$.
\begin{figure}[hbt]
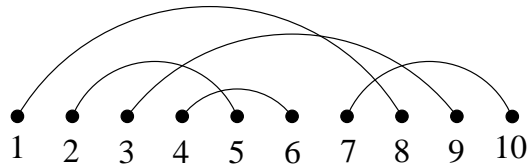

\begin{center}
\input matchingRev.pstex_t
\end{center}
\caption{The reflection of the matching in Figure
\ref{f-matching}.\label{f-matching-rev}}
\end{figure}

\begin{prop}\label{p-palindron}
The exponential generating function of the number $\pam_n$ of
bilaterally symmetric matchings on $[2n]$ is
$$\sum_{n\ge 0} \pam_n\frac{t^n}{n!} =\exp (t+t^2). $$
\end{prop}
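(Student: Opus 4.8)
The plan is to work directly with the reflection $\sigma\colon i\mapsto 2n+1-i$, a fixed-point-free involution of $[2n]$ that partitions the points into $n$ \emph{columns} $C_k=\{k,\,2n+1-k\}$ for $1\le k\le n$; a matching $M$ is bilaterally symmetric exactly when its edge set is invariant under $\sigma$. The first and decisive step is a structural dichotomy: in a symmetric matching each column is either matched to itself or matched wholesale to a single other column. Indeed, fix a symmetric $M$ and a column $C_k$, and say $k$ is joined to a point $p$. Applying $\sigma$ to the edge $(k,p)$ shows that $(2n+1-p,\,2n+1-k)$ is also an edge, so the partner point $2n+1-k$ is joined to $2n+1-p$. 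As $p$ and $2n+1-p$ are mirror images they lie in a common column $C_\ell$, and therefore the two points of $C_k$ are matched precisely to the two points of $C_\ell$. Either $\ell=k$, giving a single ``fixed'' edge $k\leftrightarrow 2n+1-k$ inside $C_k$, or $\ell\ne k$ and the columns $C_k,C_\ell$ are completely matched across. In particular no connected component of edges can meet three or more columns.

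Next I would record the two local counts this dichotomy requires. A column matched internally occurs in exactly one way. A pair of distinct columns $C_k,C_\ell$ matched across corresponds to a bipartite perfect matching of their four points, of which there are exactly two, namely $k\leftrightarrow\ell,\ (2n+1-k)\leftrightarrow(2n+1-\ell)$ and $k\leftrightarrow(2n+1-\ell),\ (2n+1-k)\leftrightarrow\ell$; since $\ell\ne k$ forces $\ell,2n+1-\ell\ne 2n+1-k$, neither of these uses a fixed edge, so both are genuine two-column components. Thus a symmetric matching is the same datum as a partition of the $n$ labelled columns into blocks of size one or two, weighted by one choice per singleton and two choices per doubleton.

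Finally I would assemble the exponential generating function via the exponential formula. The connected structures live on one or two columns only, with $c_1=1$ structure on a single column and $c_2=2$ structures on a pair, so their EGF is $C(t)=c_1\,t/1!+c_2\,t^2/2!=t+t^2$. Since every symmetric matching decomposes uniquely into such components over the $n$ columns, the exponential formula yields
\[
\sum_{n\ge 0}\pam_n\frac{t^n}{n!}=\exp\bigl(C(t)\bigr)=\exp(t+t^2),
\]
as claimed. As an independent check one can sum directly: grouping by the number $j$ of two-column components gives $\pam_n=\sum_{j}\tfrac{n!}{j!\,(n-2j)!}$, and substituting $m=n-2j$ factors the EGF as $e^{t}\,e^{t^2}$.

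I expect the only real obstacle to be the structural dichotomy of the first paragraph: the coupling ``$k\leftrightarrow p$ forces $2n+1-k\leftrightarrow 2n+1-p$,'' together with the remark that $p$ and $2n+1-p$ share a column, is exactly what rules out components spanning three or more columns and pins down the local counts. Everything after it is bookkeeping handled cleanly by the exponential formula.
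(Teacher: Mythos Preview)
Your proof is correct and follows essentially the same approach as the paper: decompose a bilaterally symmetric matching into connected pieces living on one or two ``columns'' $\{k,2n+1-k\}$, observe that these come in exactly $1+2=3$ local types, and apply the exponential formula. The only cosmetic difference is that the paper reaches the same three component types by adding the auxiliary edges $(i,2n+1-i)$ to $M$, obtaining a $2$-regular graph whose cycles it then classifies, whereas you argue the dichotomy directly from the action of the reflection $\sigma$.
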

\begin{proof}
For a bilaterally symmetric matching $M$ on $[2n]$, identify it with
the graph $M'$ obtained from $M$ by adding the (dashed) edges
$(i,2n+1-i)$ for $i=1,2,\dots,n$. Then every vertex of $M'$ has
degree $2$, so that $M'$ can be uniquely decomposed into connected
components, each being a cycle. The cycles can be of only three
types, as drawn in Figure \ref{f-pam-proof}.
\begin{figure}[hbt]
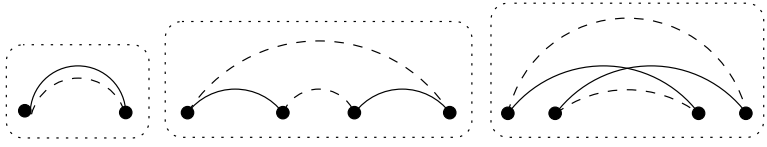

\begin{center}
\input pamProof.pstex_t
\end{center}
\caption{The types of connected components of bilaterally symmetric
matchings.\label{f-pam-proof}}
\end{figure} Therefore, the lemma follows by the well-known exponential formula
for generating functions. See, e.g., \cite[Corollary 5.1.6]{EC2}.
\end{proof}

\subsection{Connections}
We first give a list of the classical objects and their analogies:
$$\begin{array}{l||l}
\textrm{Classical Objects} & \textrm{Their analogies}\\\hline
\textrm{the symmetric group } \mathfrak{S}_n &
\textrm{the Brauer algebra }\mathfrak{B}_n \\
\textrm{the general linear group } GL(d) & \textrm{the symplectic group } Sp(2d)  \\
 \textrm{standard Young tableaux} & \textrm{oscillating tableaux }\\
\textrm{permutations on }[n] & \textrm{matchings on }[2n]  \\
\textrm{involutions} & \textrm{bilaterally symmetric matchings}
\end{array}
$$

Next we give connections in the view of enumeration. By a well-known
result in representation theory, we have
\begin{align}
\label{e-2n-1} \sum_{\mu\vdash (n-2r)} (\tilde{f}^{\mu}_n)^2
=(2n-1)!!, \qquad  \sum_{\mu\vdash n} (f^\mu)^2 =n!,
\end{align}
where the first sum ranges over all nonnegative integers $r$ with
$0\le r \le \lfloor n/2 \rfloor$ and partitions $\mu$ of $n-2r$. The
former equation of \eqref{e-2n-1} is for $\mathfrak{B}_n$ and the
latter one is for $\mathfrak{S}_n$; $\tilde{f}^{\mu}_n$ and $f^\mu$
are the dimension of the corresponding irreducible representations.
We shall always list the analogous formula before the classical one.
Equation \eqref{e-2n-1} suggests a RSK-correspondence for matchings
just as that for permutations. Observe that a pair of oscillating
tableaux of the same shape of length $n$ can be naturally combined
as one oscillating tableau of shape $\varnothing$ of length $2n$. To
be precise, the decomposition $\gamma(O)=(P,Q)$ is given by
$$\gamma: (\varnothing=\mu^0,\mu^1,\dots,\mu^{2n}=\varnothing) \longmapsto ((\varnothing=\mu^0,\mu^1,\dots,\mu^{n}), (\varnothing =\mu^{2n},\mu^{2n-1},\dots,\mu^n)).  $$

Thus it is sufficient to construct a bijection from the set
$\mathfrak{M}_n$ of matchings to the set $\mathcal{O}_n $ of
oscillating tableaux of the empty shape and length $2n$. Such a
bijection was first given by Stanley (unpublished), and was extended
by Sundaram \cite{Sundaram} to arbitrary shapes to give a
combinatorial proof of the Cauchy identity for the symplectic group
$Sp(2d)$, and was recently extended by Chen et al. \cite{chen} for
partitions.

Let $\Phi$ be the bijection from $\mathfrak{M}_n$ to $\mathcal{O}_n$
defined in \cite[Section 9]{Stanley_Increasing}. Then it was shown
in \cite{chen} that $\Phi$ has many properties. We will use the fact
that the maximum number of crossings of a matching $M$ is equal to
the maximum height of the oscillating tableau $\Phi(M)$.

We will use the following result of \cite{Xin-matching-symmetry}:
For any $M\in \mathfrak{M}_n$, we have
$\Phi(M^{refl})=\Phi(M)^{rev}$.

Since the number of palindromic oscillating tableaux is equal to the
number of bilaterally symmetric matchings $\pam_n$, we have, by
Proposition \ref{p-palindron},
\begin{align}
\label{e-2n-2} \sum_{\mu\vdash (n-2r)} \tilde{f}^{\mu}_n
=\left[\frac{t^n}{n!}\right]\exp(t+t^2), \qquad \sum_{\mu\vdash n}
f^\mu =\left[\frac{t^n}{n!}\right]\exp(t+t^2/2),
\end{align}
where the first sum ranges over all nonnegative integers $r$ with
$0\le r \le \lfloor n/2 \rfloor$ and partitions $\mu$ of $n-2r$. The
right equation of \eqref{e-2n-2} counts the number of involutions.

There are analogous results if we put restrictions on the height of
the tableaux. A $d$-oscillating tableau, also called
$d$-\emph{symplectic up-down tableau}, is an oscillating tableau of
a bounded height $d$, by which we mean that the height of every
$\mu^i$ is no larger than $d$. Denote by $\tilde{f}^{\mu}_{n}(d)$
the number of $d$-oscillating tableaux of shape $\mu$ and length
$n$. See \cite{Sundaram} for more information.

There is a natural bijection showing that
$\tilde{f}^{\mu}_{n}(d)=b_n(\bar{\textbf{0}},\bmu)$, which has a
determinant formula as in \eqref{e-starting-ending-walks}. The
bijection simply takes $(\mu^0,\mu^1,\dots,\mu^{n})$ to the sequence
of lattice points $(\bar{\mu}^0,\bmu^1,\dots,\bmu^n)$. Therefore
results on oscillating lattice walks can be translated into those on
oscillating tableaux. Applying $\gamma$ to $d$-oscillating tableaux
of shape $\varnothing$ and length $2n$, and applying Theorem
\ref{t-Grabiner-Magyar}, we obtain
\begin{align}
\label{e-2n-1-d} \sum_{\mu\vdash (n-2r)} (\tilde{f}^{\mu}_n(d))^2
=b_{2n}(\bar{\textbf{0}},\bar{\textbf{0}})=\left[\frac{t^{2n}}{(2n)!}\right]
\det\left( I_{i-j}(2t)-I_{i+j}(2t) \right)_{1\le i,j \le d},
\end{align}
an analogy of Theorem \ref{t-Gessel}. This is also the number of
$(d+1)$-noncrossing/nonnesting matchings. See \cite[Equation
(9)]{chen} (by setting $k=d+1$) and references therein.

Theorem \ref{t-main} actually gives
$$
 \sum_{\mu\vdash (n-2r)} \tilde{f}^{\mu}_n(d) = \left[\frac{t^{n}}{n!}\right]
\det( I_{i-j}(2t)+I_{i-j-1}(2t) )_{1\le i, j\le d}.  $$ This is an
analogy of Gessel's determinant formula for involutions. See
\cite{Gessel}. See also \cite[Sections 4\&5]{Stanley_Increasing}.

Let $\increasingS(w)$ be the length of the longest increasing
subsequences of $w\in \mathfrak{S}_n$, and let $\crossingN(M)$ be
the maximum crossing number of $M \in \mathfrak{M}_n$. Then we have
the following table.

$$\begin{array}{l||l}
\textrm{Classical Objects} & \textrm{Their analogy}\\\hline
\textrm{the general linear group } GL(d) & \textrm{the symplectic group } Sp(2d)  \\
\textrm{SYT of bounded height $d$} & \textrm{oscillating tableaux of bounded height $d$}\\
\{w\in \mathfrak{S}_n: \increasingS(w) \le d \} & \{M\in \mathfrak{M}_n: \crossingN(M)\le d\}   \\
\quad \cdots \quad \& \textrm{ is an involution} & \quad \cdots
\quad \& \textrm{ is bilaterally symmetric}
\end{array}
$$

Stanley \cite{Stanley_Increasing} gave a nice survey for the study
of increasing and decreasing subsequences of permutations and their
variants. One major problem in this area is to understand the
behavior of $\increasingS(w)$. For instance, what is the limiting
distribution of $\increasingS(w)$ for permutations? Gessel's
determinant formula reduces such problem to analysis, which was
solved by Baik, Deift, and Johansson \cite{Baik-Deift-Johansson}
using their techniques. In our table, the limiting distribution
formulas of $\increasingS(w)$ for permutations and for involutions,
and that of $\crossingN(M)$ for matchings are known. The
distribution for bilaterally symmetric matchings should be obtained
in a similar way, but this needs to be checked.

\subsection{Applications}
We first summarize several consequences of Theorem \ref{t-main}.
\begin{cor}\label{c-main}
The following quantities are equal to
$$\left[\frac{t^{n}}{n!}\right]
\det( I_{i-j}(2t)+I_{i-j-1}(2t) )_{1\le i, j\le d}.$$
\begin{enumerate}
\item The number of palindromic Weyl oscillating lattice walks of length $2n$ and starting at
$\bar{\mathbf{0}}$.

\item The number of palindromic oscillating tableaux of length $2n$.

\item The number $\pam_n(d)$ of bilaterally symmetric $(d+1)$-noncrossing/nonnesting matchings on
$[2n]$.

\item The number of oscillating tableaux of any shape and length
$n$.
\end{enumerate}
\end{cor}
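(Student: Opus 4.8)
The plan is to identify all four quantities with the single combinatorial object named in item (4), namely the number of $d$-oscillating tableaux of length $n$ and arbitrary shape. First I would record the algebraic observation that, since $J_s(2t)=I_s(2t)+I_{s-1}(2t)$, the determinant in the statement is literally $\det(J_{i-j}(2t))_{1\le i,j\le d}=G(t)$, the generating function of Theorem \ref{t-main}. By the bijection $\tilde{f}^\mu_n(d)=b_n(\bzero;\bmu)$ recorded in section 2, the coefficient $[t^n/n!]\,G(t)=\sum_{\mu\in W^d} b_n(\bzero;\mu)=\sum_\mu \tilde{f}^\mu_n(d)$ is exactly the number of $d$-oscillating tableaux of length $n$ and any shape. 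This disposes of item (4) at once and reduces the corollary to producing bijections between item (4) and each of items (1)--(3).

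The central step is a \emph{folding} bijection between $d$-oscillating tableaux $(\varnothing=\mu^0,\dots,\mu^n)$ of length $n$ and arbitrary shape, and palindromic $d$-oscillating tableaux of length $2n$, which is item (2). Given a length-$n$ tableau I would reflect it about its right end, setting $\mu^{2n-i}:=\mu^i$; the resulting sequence has length $2n$, returns to $\varnothing$, stays within height $d$, and its second-half steps are the reverses of the first-half steps, hence legal add/remove moves. The map is invertible by truncating a palindromic tableau to its first half, and one must only check that the middle shape $\mu^n$ is unconstrained (every partition of the correct parity and height occurs) and that no extra compatibility is forced at the junction $\mu^{n-1},\mu^n,\mu^{n+1}=\mu^{n-1}$. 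I expect this bookkeeping to be routine, but it is the one genuinely new argument.

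Items (1) and (3) then follow from correspondences already available. For item (1), the bijection $(\mu^i)\mapsto(\bmu^i)$ between oscillating tableaux and Weyl walks starting at $\bzero$ carries the reversal $O\mapsto O^{rev}$ to reversal of the walk, so palindromic tableaux match palindromic walks of length $2n$ (which necessarily start and end at $\bzero$); this identifies (1) with (2). For item (3), I would invoke $\Phi:\mathfrak{M}_n\to\mathcal{O}_n$ together with the two properties from section 2: the maximum crossing number of $M$ equals the maximum height of $\Phi(M)$, and $\Phi(M^{refl})=\Phi(M)^{rev}$. Together these say precisely that $\Phi$ restricts to a bijection from bilaterally symmetric $(d+1)$-noncrossing matchings onto palindromic $d$-oscillating tableaux, giving $(3)=(2)$ in the noncrossing case.

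The main obstacle I anticipate is the \emph{nonnesting} half of item (3). The identity $\Phi(M^{refl})=\Phi(M)^{rev}$ and the crossing--height correspondence are stated only for crossings, so to count bilaterally symmetric $(d+1)$-nonnesting matchings by the same determinant I would need a crossing--nesting symmetry of Chen et al.\ that commutes with the reflection $M\mapsto M^{refl}$. Since reflection in the vertical axis $x=n+1/2$ preserves both the crossing number and the nesting number, it suffices to exhibit (or cite) an involution realizing the $(d+1)$-noncrossing/nonnesting equidistribution that respects bilateral symmetry; establishing that compatibility, rather than any generating-function computation, is where I would expect to spend the real effort.
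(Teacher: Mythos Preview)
Your approach is essentially the paper's own: the corollary is offered there without proof, as a summary of the discussion in Section~2, and your ``folding'' bijection is nothing but the restriction of the paper's map $\gamma$ to palindromic tableaux (so $O$ palindromic $\Leftrightarrow P=Q$), while items (1) and the noncrossing half of (3) are exactly the two cited properties of the walk bijection and of $\Phi$.

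Your one flagged obstacle, the nonnesting half of (3), is real but easily closed. The crossing--nesting involution of Chen et al.\ is realized on the tableau side by transposing every partition in $\Phi(M)$; since transposition of each $\mu^i$ obviously commutes with reversal of the sequence $O\mapsto O^{\mathrm{rev}}$, and $\Phi(M^{\mathrm{refl}})=\Phi(M)^{\mathrm{rev}}$, the involution commutes with $M\mapsto M^{\mathrm{refl}}$ and hence restricts to a bijection between bilaterally symmetric $(d+1)$-noncrossing and $(d+1)$-nonnesting matchings. No further generating-function work is needed.
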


We can compute $\pam_n(d)$ for small $d$. For the case $d=1$, we
have
\begin{align}
\pam_{2n}(1)=\binom{2n}{n}, \text{ and }
\pam_{2n+1}(1)=\frac{1}{2}\binom{2n+2}{n+1}. \label{e-consequence-1}
\end{align}

This is a direct consequence of Theorem \ref{t-main}, but we give an
alternative proof.
\begin{proof}[Proof of \eqref{e-consequence-1}]
By Corollary \ref{c-main} part (3), we need to compute noncrossing
bilaterally symmetric matchings on $[2n]$. Let $P(t)$ be the
generating function $P(t) =\sum_{n\ge 0}\pam_{n}(1)t^n$. Consider
the possibility of the edge $(1,m)$ in the bilaterally symmetric
matching $M$. One sees that if $m>n$, then $m$ must equal $2n$ to
avoid a crossing. Thus we have the decomposition of $M$ as in Figure
\ref{f-matching-decomposition}, where we use semi-circles to
indicate noncrossing matchings, and use trapezoid to indicate
noncrossing bilaterally symmetric matchings.
\begin{figure}[hbt]
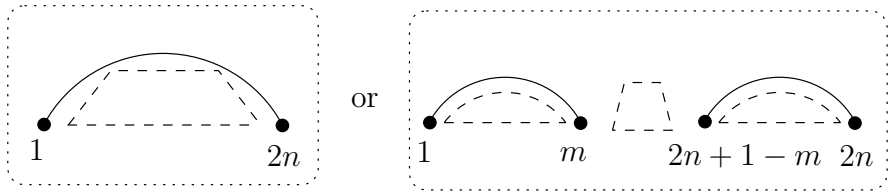

\begin{center}
\input matchingDecomposition.pstex_t
\end{center}
\caption{\label{f-matching-decomposition}Decomposition of
noncrossing bilaterally symmetric matchings.}
\end{figure}
Therefore, we obtain the functional equation:
$$P(t)=1+tP(t)+t^2C(t^2)P(t), $$
where $C(t)=\frac{1-\sqrt{1-4t}}{2t}$ is the Catalan generating
function, which is known to be the ordinary generating function for
noncrossing matchings. Direct algebraic calculation shows
\eqref{e-consequence-1}.
\end{proof}

For the case $d=2$, we have
\begin{align}
\pam_{2n}(2)&
=\frac{1}{2}\binom{2n+2}{n+1}C_n=\frac{(2n+1)!(2n)!}{(n!(n+1)!)^2},\label{e-consequence-2e}\\
\pam_{2n+1}(2)&=\frac{1}{2}\binom{2n+2}{n+1}C_{n+1}=\frac{(2n+1)!(2n+2)!}{n!(n+1)!^2(n+2)!}.\label{e-consequence-2o}
\end{align}

For the case $d=3$, we obtain
\begin{align}
\pam_{2n}(3)&=\sum_{s=0}^n  \frac{2 (2s+1)!}{s!^2(s+1)!(s+2)!} \cdot
\frac{(2n)!}{(n-s)!(n-s+1)!},\label{e-consequence-3e} \\
\pam_{2n+1}(3)&=\sum_{s=0}^n  \frac{2 (2s+2)!}{s!(s+1)!(s+2)!^2}
\cdot \frac{(2n+1)!}{(n-s)!(n-s+1)!}.\label{e-consequence-3o}
\end{align}
By a general theory, these sequences are P-recursive, or their
generating functions (for any $d$) are D-finite. See, e.g.,
\cite[Chapter 6]{EC2}. We use the creative telescoping of \cite{AB}
to find that $\pam_{2n}(3)$ satisfies a second order P-recursion
\cite[Chapter 6]{EC2}:
\begin{multline}
 \left( n+5 \right)  \left( n+4 \right)  \left( n+3 \right)
 \pam_{2n+4}(3)\\=4\left( 5{n}^{2}+30n+43 \right) \left( 2n+3 \right)
\pam_{2n+2}(3)-36\left( 2n+3 \right) \left( 2n+1 \right) \left( n+1
\right) \pam_{2n}(3),
\end{multline}
subject to $\pam_0(3)=1,\pam_2(3)=3$; and that $\pam_{2n+1}(3)$
satisfies a $P$-recursion of order $3$, which is too lengthy to be
given here.

Formulas (\ref{e-consequence-2e},\ref{e-consequence-2o}) are
straightforward by, e.g., the creative telescoping. Formulas
(\ref{e-consequence-3e},\ref{e-consequence-3o}) need some work. We
will write $I_i$ for $I_i(2t)$ for short, and use the following
facts: $I_i=I_{-i}$, $I_{2i}$ contains only even powers in $t$, and
$I_{2i+1}$ contains only odd powers in $t$.

\begin{proof}
[Proof of (\ref{e-consequence-3e},\ref{e-consequence-3o})]
 By Theorem \ref{t-main}, the exponential generating function is
\begin{multline*}
\det \left(\begin {array}{ccc}
I_{{1}}+I_{{0}}&I_{{2}}+I_{{1}}&I_{{2}}+I_
{{3}}\\I_{{1}}+I_{{0}}&I_{{1}}+I_{{0}}&I_{{2}}+I_{{1
}}\\I_{{2}}+I_{{1}}&I_{{1}}+I_{{0}}&I_{{1}}+I_{{0}}
\end {array} \right)\\
=(I_0-I_2)(({I_{{0}}}^{2}-{I_{{1}}
}^{2}-{I_{{2}}}^{2}+I_{{1}}I_{{3}}
)+(I_{{0}}I_{{1}}+I_{{0}}I_{{3}}-2I_{{1}}I_{{2}} )),
\end{multline*}

where $I_0-I_2$ contains only even powers in $t$, and in the right
factor, we have separated the sum according to the parity of the
powers in $t$.

Now it is straightforward, by the creative telescoping, to show that
\begin{align*}
I_0-I_2&=\sum_{n\ge 0} \frac{t^{2n}}{n!(n+1)!},\\
{I_{{0}}}^{2}-{I_{{1}}
}^{2}-{I_{{2}}}^{2}+I_{{1}}I_{{3}}&=\sum_{n\ge 0}
\frac{2(2n+1)!t^{2n}}{n!^2(n+1)!(n+2)!},\\
I_{{0}}I_{{1}}+I_{{0}}I_{{3}}-2I_{{1}}I_{{2}}&=\sum_{n\ge 0} \frac{2
(2n+2)!t^{2n+1}}{n!(n+1)!(n+2)!^2}.
\end{align*}
Equations (\ref{e-consequence-3e},\ref{e-consequence-3o}) then
follow.
\end{proof}

We remark that $\{\pam_{2n}(2)\}_{n\ge 0}$ gives the sequence
A000891 in the Online Encyclopedia of Integer Sequences \cite{OEIS}.
One of its interpretation can be stated in our term: it counts the
number of noncrossing partitions of $[2n+1]$ into $n+1$ blocks. We
also remark that $\{\pam_{2n}(3)\}_{n\ge 0}$ gives the sequence
A064037 in \cite{OEIS}. The only known interpretation is: it counts
the number of 3-dimensional oscillating lattice walks of length
$2n$, starting and ending at the origin, and staying within the
nonnegative octant. Bijections for these objects are desirable.

\section{Algebraic Description of the Reflection Principle}

A classical application of the reflection principle is to the ballot
problem, which, in random walks version, asks how many ways there
are to walk from the origin to a point
$(\lambda_1,\dots,\lambda_d)_{\ge}$, with each step a positive unit
coordinate vector and confined in the region $x_1\ge x_2\ge \cdots
\ge x_d\ge 0$. The reflection principle of
\cite{gessel-zeil,Zeilberger-reflection} gives a determinant
formula, from which the hook-length formula for SYTs can be deduced.
Our objective in this section is to give short algebraic derivations
of this formula and the formula of Grabiner and Magyar.

In the context of lattice walks, it is convenient to shift the
coordinates a little and denote by $W^d=\{\;(x_1,\dots, x_d): x_1
> \dots
> x_d>0, x_i \in \ZZ \; \}$ the $d$-dimensional
\emph{Weyl chamber}. From now on, $\lambda,\mu$ will not denote
partitions as in previous sections. Let $\bdelta=(d,d-1,\dots,1)$.
Then any $\mu\in W^d$ corresponds to a unique partition
$\mu-\bdelta$.

\subsection{Hook-Length Formula}

\begin{thm}[Hook-Length Formula]
\label{t-HLF} The number of standard Young tableaux of shape
$\lambda$ is
\begin{equation}
f^\lambda = (\lambda_1+\cdots+\lambda_d)! \det
\left(\frac{1}{(\lambda_i-i+j)!}\right)_{1\le i,j \le d}.
\end{equation}
\end{thm}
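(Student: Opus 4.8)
The plan is to realize $f^\lambda$ as the number of confined lattice walks in the Weyl chamber and then apply the reflection principle, the passage to a determinant being a short algebraic step in which the exponential generating function handles the bookkeeping. First I would interpret the left-hand side: a standard Young tableau of shape $\lambda$ is the same data as a chain $\varnothing = \lambda^{(0)} \subset \lambda^{(1)} \subset \cdots \subset \lambda^{(n)} = \lambda$ of partitions in which each $\lambda^{(k)}$ is obtained from $\lambda^{(k-1)}$ by adjoining a single box, where $n = \lambda_1 + \cdots + \lambda_d$. Passing to associates $\blambda := \lambda + \bdelta$, such a chain is precisely a lattice walk from $\bdelta$ to $\blambda$ that uses positive unit coordinate steps and stays inside $W^d$, the strict inequalities defining $W^d$ encoding exactly the requirement that every $\lambda^{(k)}$ be a genuine partition. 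Hence $f^\lambda$ equals the number of such confined walks, a ballot-type quantity.

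Next I would apply the reflection principle of \cite{gessel-zeil, Zeilberger-reflection}. Reflecting the starting point across the walls $x_i = x_j$ expresses the number of confined walks as the signed sum
$$f^\lambda = \sum_{\sigma \in \sss_d} \mathrm{sgn}(\sigma)\, p\big(\sigma(\bdelta) \to \blambda\big),$$
where $p(a \to b)$ counts unconstrained positive-unit-step walks from $a$ to $b$. A free walk from $a$ to $b$ is a shuffle of $b_i - a_i$ steps of type $i$ over all $i$, so $p(a \to b) = N!\big/\prod_i (b_i - a_i)!$ with $N = \sum_i (b_i - a_i)$; equivalently $p(a \to b) = N!\,[t^N]\prod_{i=1}^d t^{\,b_i - a_i}/(b_i - a_i)!$, the exponential-generating-function form that makes the shuffle structure and the factorials transparent, with the convention $1/k! = 0$ for $k < 0$ automatically discarding permutations for which some displacement is negative. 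The key point is that $\sum_i\big(\blambda_i - (\sigma\bdelta)_i\big) = n$ for every $\sigma$, since the coordinate sum is permutation-invariant; thus all terms share the common total $N = n$ and the common prefactor $n!$. This homogeneity is exactly the reason the exponential generating function is the natural device here.

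Finally I would collect the sum into a determinant. Pulling out the common factor $n!$ and reindexing $\sigma \mapsto \sigma^{-1}$ turns the signed sum into $n!\,\det\big(1/(\blambda_i - \bdelta_j)!\big)_{1 \le i,j \le d}$, and the substitution $\blambda_i - \bdelta_j = (\lambda_i + d - i + 1) - (d - j + 1) = \lambda_i - i + j$ produces the asserted formula. I expect the only genuine obstacle to be the reflection principle itself, that is, the sign-reversing involution cancelling every walk that touches some wall $x_i = x_j$; taking that as the cited input, what remains is the homogeneity observation above together with routine index bookkeeping for the coordinate shift.
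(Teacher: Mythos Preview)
Your argument is correct, but it is not the paper's own route. The whole point of Section~\ref{t-HLF}'s surrounding discussion is to give a \emph{self-contained algebraic} derivation that avoids invoking the combinatorial reflection principle as an external result. The paper instead proves Proposition~\ref{p-Fx}: it writes down the rational function $\bar F(x)=a_\lambda(x)/(1-x_1-\cdots-x_d)$ and checks directly that its coefficients satisfy the three conditions (initial, boundary, step) that uniquely characterize the confined walk numbers $f(\lambda;\mu)$. The boundary condition comes from the antisymmetry $\bar F(\ldots,x_i,x_{i+1},\ldots)=-\bar F(\ldots,x_{i+1},x_i,\ldots)$, which is the algebraic shadow of the reflection you cite; the step recursion comes from the denominator; the initial condition comes from a degree argument on $a_\lambda$. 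Extracting the coefficient of $x^{\blambda}$ from $\bar F$ at $\lambda=\bdelta$ then yields the determinant exactly as in your last paragraph.

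What each approach buys: your proof is shorter if one is willing to import the sign-reversing involution of \cite{gessel-zeil,Zeilberger-reflection} as a black box, and it makes the homogeneity observation (that every $\sigma$ contributes the same total displacement $n$) the visible reason for the common factor $n!$. The paper's approach requires no such citation, and---more to its purpose---the recursion-verification template transfers verbatim to other step sets: the same three-condition check, with the denominator replaced by $1-t\sum(x_i+x_i^{-1})$ and an extra antisymmetry $x_d\mapsto x_d^{-1}$, immediately yields Proposition~\ref{p-Bxt} and hence the Grabiner--Magyar formula. That uniformity is the paper's reason for preferring the algebraic packaging.
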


Fixing a starting point $\lambda\in W^d$, we let $f(\lambda; \mu)$
be the number of $W^d$-walks from $\lambda$ to $\mu$, with only
positive unit coordinate vector steps. Clearly, the length of such
walks, if exist, is $|\mu|-|\lambda|$, where
$|\mu|=\mu_1+\cdots+\mu_d$. Then the number $f^{\mu-\bdelta}$ of
SYTs of shape $\mu-\bdelta$ equals $f(\bdelta,\mu)$.

Let $F(x)=F(x_1,\dots,x_d)$ be the generating function
$$F(x_1,\dots,x_d)=\sum_{\mu\in W^d} f(\lambda;\mu) x^\mu , $$
where $x^\mu=x_1^{\mu_1}x_2^{\mu_2}\cdots x_d^{\mu_d}$ records the
end points. From known results, $F(x)$ is $D$-finite and does not
have a simple expression. But $F(x)$ has a simple rational function
\emph{extension}:
\begin{equation}
\bar{F}(x)=\frac{a_\lambda(x)}{1-(x_1+x_2+\cdots+x_d)},
\label{e-extension}
\end{equation}
where $a_\lambda(x)$ is the alternant
$\det\left(x_i^{\lambda_j}\right)_{1\le i,j\le d}$.
\begin{prop}\label{p-Fx}
Let $\bar{F}(x)$ be as above. If we expand
$$\bar{F}(x)=\frac{a_\lambda(x)}{1-(x_1+x_2+\cdots+x_d)}=\sum_{\eta\in \NN^d} \bar{f}(\lambda;\eta) x^\eta , $$
then
$\bar{f}(\lambda;\mu)=f(\lambda;\mu)$ for all $\mu$ in the closure
of $W^d$.
\end{prop}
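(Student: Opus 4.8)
The plan is to read off the coefficients of $\bar F(x)$ explicitly, recognize them via the reflection principle as the confined walk numbers $f(\lambda;\mu)$, and handle the boundary of the chamber by a separate one-line cancellation. Throughout, write $N(\nu;\mu)=\binom{|\mu|-|\nu|}{\mu_1-\nu_1,\dots,\mu_d-\nu_d}$ for the number of \emph{unrestricted} walks from $\nu$ to $\mu$ using positive unit steps, with the convention that $N(\nu;\mu)=0$ unless $\mu_i\ge\nu_i$ for every $i$. Since this step set is invariant under permuting coordinates, $N(w\nu;w\mu)=N(\nu;\mu)$ for every $w\in\sss_d$, a symmetry that will do all the real work.

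First I would expand the extension termwise. Since $\frac{1}{1-(x_1+\cdots+x_d)}=\sum_{\alpha\in\NN^d}\binom{|\alpha|}{\alpha_1,\dots,\alpha_d}x^\alpha$, the series $x^\nu/(1-(x_1+\cdots+x_d))$ is precisely $\sum_\mu N(\nu;\mu)x^\mu$. Expanding the alternant as $a_\lambda(x)=\det(x_i^{\lambda_j})=\sum_{w\in\sss_d}\mathrm{sgn}(w)\,x^{w(\lambda)}$, where $w$ acts by permuting coordinates, and dividing through, I get
\begin{equation*}
\bar F(x)=\sum_{w\in\sss_d}\mathrm{sgn}(w)\,\frac{x^{w(\lambda)}}{1-(x_1+\cdots+x_d)}=\sum_{\mu}\Bigl(\sum_{w\in\sss_d}\mathrm{sgn}(w)\,N(w(\lambda);\mu)\Bigr)x^\mu,
\end{equation*}
so that $\bar f(\lambda;\mu)=\sum_{w\in\sss_d}\mathrm{sgn}(w)\,N(w(\lambda);\mu)$ for every $\mu\in\NN^d$.

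Next I would match this alternating sum to $f(\lambda;\mu)$ on the interior of the chamber. A preliminary observation keeps the reflection group small: since every step is a positive unit vector and $\lambda_d\ge 1$, the last coordinate never decreases, so the wall $x_d=0$ can never be reached; the only walls a walk can meet are $x_i=x_{i+1}$, whose reflections generate $\sss_d$. This is exactly why the type-$A$ alternant $\det(x_i^{\lambda_j})$, rather than a signed-permutation analogue, is the right numerator. For $\mu$ in the open chamber $W^d$, the reflection principle of \cite{gessel-zeil,Zeilberger-reflection} then applies: reflecting a wall-touching walk in the first wall it meets is a sign-reversing involution pairing the contribution of $w$ with that of $s_iw$, so all wall-touching terms cancel and only the genuine $W^d$-walks (the $w=\mathrm{id}$ term) survive, giving $\bar f(\lambda;\mu)=f(\lambda;\mu)$.

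Finally, for $\mu$ in the closure but not in $W^d$, some equality $\mu_i=\mu_{i+1}$ holds, so the transposition $s_i$ fixes $\mu$. Pairing $w$ with $s_iw$ and using $N(s_iw(\lambda);\mu)=N(w(\lambda);s_i\mu)=N(w(\lambda);\mu)$ together with $\mathrm{sgn}(s_iw)=-\mathrm{sgn}(w)$ makes the whole sum vanish, so $\bar f(\lambda;\mu)=0$; and since no $W^d$-walk can end on a wall, $f(\lambda;\mu)=0$ as well, so equality persists on all of the closure. I expect the genuine content to be the interior reflection argument (the sign-reversing involution on wall-touching walks), which is the only non-formal step and is classical enough to cite; everything else is bookkeeping. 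The one point demanding care is the justification that $x_d=0$ is unreachable, since this is what reduces the ambient Weyl group to $\sss_d$ and makes the clean identity $\bar f(\lambda;\mu)=\sum_{w}\mathrm{sgn}(w)N(w(\lambda);\mu)$ the correct bridge between $\bar F$ and $f$.
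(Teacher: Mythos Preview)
Your argument is correct, but it takes a genuinely different route from the paper's. The paper proves the proposition by recursion verification: it notes that $f(\lambda;\mu)$ is uniquely determined on the closure of $W^d$ by (i) the initial condition $f(\lambda;\mu)=\chi(\mu=\lambda)$ when $|\mu|\le|\lambda|$, (ii) vanishing on the walls $\mu_i=\mu_{i+1}$, and (iii) the one-step recursion $f(\lambda;\mu)=\sum_i f(\lambda;\mu-e_i)$, and then checks that the coefficients $\bar f(\lambda;\mu)$ satisfy the same three conditions directly from the antisymmetry and rational shape of $\bar F(x)$. You instead expand $\bar f(\lambda;\mu)$ explicitly as the signed sum $\sum_{w}\mathrm{sgn}(w)\,N(w(\lambda);\mu)$ and invoke the Gessel--Zeilberger reflection principle as a cited result to identify it with $f(\lambda;\mu)$ on the open chamber, treating the walls separately. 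The paper's method is entirely self-contained and is set up precisely so that it transfers verbatim to other step sets (as it immediately does in Proposition~\ref{p-Bxt}); your method makes the combinatorics visible but outsources the one substantive step to the literature.

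One small slip: the sentence ``for $\mu$ in the closure but not in $W^d$, some equality $\mu_i=\mu_{i+1}$ holds'' overlooks the wall $\mu_d=0$. This is harmless in the end --- your own earlier observation that every coordinate of every $w(\lambda)$ is at least $1$ and that steps only increase coordinates already forces $N(w(\lambda);\mu)=0$ for all $w$ when $\mu_d=0$, so both $\bar f$ and $f$ vanish there --- but the sentence as stated is false and should be amended.
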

\begin{proof}
 Let $e_i$ be the $i$th unit coordinate vector. Let $\chi(S)=1$ if the statement $S$ is true and $0$ otherwise.
Then for $\mu$ in the closure of $W^d$, $f(\lambda;\mu)$ can be
uniquely characterized by the following recursion:
\begin{enumerate}
\item[(i)] If $|\mu|\le |\lambda|$, then $f(\lambda;\mu)= \chi(\mu=\lambda).$

\item[(ii)] If $\mu_i=\mu_{i+1}$ for $1\le i \le
d-1 $, then  $f(\lambda;\mu) =0$.

\item[(iii)] If $|\mu|-|\lambda|>0 $, then $f(\lambda;\mu) =\sum_{i=1}^d f(\lambda;\mu-e_i).$
\end{enumerate}
Therefore, it suffices to show that $\bar{f}(\lambda;\mu)$ also
satisfies the above three conditions.

Condition (iii) is trivial according to \eqref{e-extension};
Condition (ii) follows easily from
$$\bar{F}(x_1,\dots,x_{i-1},x_{i+1},x_i,x_{i+2},\dots,x_d)=-\bar{F}(x_1,\dots,x_d);$$
To show condition (i), we notice that the numerator of $\bar{F}(x)$
is homogeneous of degree $|\lambda|$, and the least degree term in
the series expansion of $(1-x_1-\cdots -x_d)^{-1}$ is $1$. This
implies that if $|\eta|\le |\lambda|$, then $\bar{f}(\lambda;\eta)$
equals $(-1)^{\pi}$ if $\eta = \pi(\lambda)$ for some $\pi\in
\mathfrak{S}_d$ and zero otherwise, where $(-1)^\pi$ is the sign of
$\pi$ and $\pi(\lambda)=(\lambda_{\pi_1},\dots,\lambda_{\pi_d})$.
Condition (i) follows since $\pi(\lambda)\in W^d$ only if $\pi$ is
the identity.

This completes the proof.
\end{proof}

By setting $\lambda=\bdelta$, one can derive the hook-length
formula, Theorem \ref{t-HLF}. This completes our first objective of
this section.

\subsection{Grabiner-Magyar Determinant Formula} The same argument applies to more general situations,
such as with a different set of allowing steps. We give one more
example to illustrate the idea. Note that the underlying idea is the
reflection principle.

Fix a starting point $\lambda\in W^d$ (the most interesting case is
$\lambda=\bdelta$). Let $b_n(\lambda;\mu)$ be the number of {Weyl
oscillating lattice walks} of length $n$ from $\lambda$ to $\mu$.
Note that we changed the notation here. The $\blambda$ is
abbreviated by $\lambda$, and similar for $\bmu$.

\begin{prop}\label{p-Bxt}
For fixed $\lambda\in W^d$, let
\begin{equation}
B_\lambda(x;t)= \frac{\det(x_i^{\lambda_j}-x_i^{-\lambda_j})_{1\le
i,j \le d}}{1-t(x_1+x_1^{-1}+x_2+x_2^{-1}+\cdots +x_d+x_d^{-1})}.
\end{equation}
Then $ [x^\mu t^n]\, B_\lambda(x;t)= b_n(\lambda;\mu)$ for any $\mu$
in the closure of $ W^d$ and $n\in \NN$.
\end{prop}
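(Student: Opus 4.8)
The plan is to mirror the proof of Proposition \ref{p-Fx}: I will write down a recursion that uniquely characterizes $b_n(\lambda;\mu)$ on the closure of $W^d$, and then check that the coefficients $\bar{b}_n(\lambda;\mu):=[x^\mu t^n]B_\lambda(x;t)$ satisfy the same recursion. Throughout, set $a_\lambda^-(x)=\det(x_i^{\lambda_j}-x_i^{-\lambda_j})_{1\le i,j\le d}$ and $\Sigma=\sum_{i=1}^d(x_i+x_i^{-1})$, so that $B_\lambda(x;t)=a_\lambda^-(x)/(1-t\Sigma)$.

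First I would record the characterizing recursion. Writing $e_i$ for the $i$th unit coordinate vector, the combinatorial quantity $b_n(\lambda;\mu)$, for $\mu$ in the closure of $W^d$, is determined by: (i) $b_0(\lambda;\mu)=\chi(\mu=\lambda)$; (ii) $b_n(\lambda;\mu)=0$ whenever $\mu$ lies on the boundary, i.e.\ $\mu_i=\mu_{i+1}$ for some $i$ or $\mu_d=0$; and (iii) for $n\ge 1$ and $\mu$ in the open chamber $W^d$, $b_n(\lambda;\mu)=\sum_{i=1}^d\bigl(b_{n-1}(\lambda;\mu-e_i)+b_{n-1}(\lambda;\mu+e_i)\bigr)$. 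Conditions (i)--(iii) determine $b_n$ on the closure by induction on $n$. The combinatorial $b_n$ satisfies them because the last step of a walk ending at an interior $\mu$ is some $\pm e_i$, its predecessor $\mu\mp e_i$ again lies in the closure (a short check using $\mu_1>\cdots>\mu_d>0$), and predecessors landing on the boundary contribute nothing by (ii), so the displaced points $\mu\pm e_i$ that leave $W^d$ may be included harmlessly.

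Next I would verify that $\bar{b}_n$ obeys the same three conditions. For (iii), clearing the denominator gives $B_\lambda(x;t)\,(1-t\Sigma)=a_\lambda^-(x)$, whose right side is free of $t$; extracting $[x^\mu t^n]$ for $n\ge 1$ yields $\bar{b}_n(\lambda;\mu)=\sum_{i=1}^d\bigl(\bar{b}_{n-1}(\lambda;\mu-e_i)+\bar{b}_{n-1}(\lambda;\mu+e_i)\bigr)$ for every $\mu$, in particular for interior $\mu$. For (ii), I use that $a_\lambda^-(x)$ is alternating under transposing $x_i\leftrightarrow x_{i+1}$ and changes sign under $x_i\mapsto x_i^{-1}$, while the denominator is invariant under both; hence $\bar{b}_n$ is antisymmetric in $(\mu_i,\mu_{i+1})$ and satisfies $\bar{b}_n(\ldots,\mu_i,\ldots)=-\bar{b}_n(\ldots,-\mu_i,\ldots)$, forcing $\bar{b}_n=0$ when $\mu_i=\mu_{i+1}$ or $\mu_d=0$. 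For (i), I set $t=0$ to get $\bar{b}_0(\lambda;\mu)=[x^\mu]a_\lambda^-(x)$; expanding the determinant, every monomial has exponent vector $(\varepsilon_1\lambda_{\pi(1)},\dots,\varepsilon_d\lambda_{\pi(d)})$ with $\varepsilon_i\in\{\pm1\}$ and $\pi\in\mathfrak{S}_d$, and since the $\lambda_j>0$ are distinct, a nonnegative weakly decreasing $\mu$ can only arise with all $\varepsilon_i=+1$ and $\pi$ the identity, giving coefficient $1$ exactly when $\mu=\lambda$.

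Comparing by induction on $n$ then finishes the proof: $\bar{b}_0=b_0$ on the closure by (i); assuming $\bar{b}_{n-1}=b_{n-1}$ there, interior points agree by the matching recursions (iii) (all displaced points $\mu\pm e_i$ again lying in the closure), and boundary points agree because both vanish by (ii). The one place demanding care is the boundary at $\mu_d=0$: unlike the positive-step setting of Proposition \ref{p-Fx}, oscillating walks genuinely reach the hyperplane $x_d=0$, so the reflection encoded by the terms $-x_i^{-\lambda_j}$ in $a_\lambda^-$ is essential, and establishing the vanishing (ii) there \emph{before} invoking (iii) is exactly what legitimizes including all $\pm e_i$ terms in the interior recursion.
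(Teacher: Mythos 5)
Your proof is correct and follows essentially the same route as the paper: characterize $b_n(\lambda;\mu)$ by the recursion (i)--(iii) on the closure of $W^d$ and check that the coefficients of $B_\lambda(x;t)$ satisfy it, using the transposition antisymmetry and the sign change under inversion of a variable for the boundary, and clearing the denominator for the recursion. Your substitution $x_d\mapsto x_d^{-1}$ is the correct multiplicative form of the reflection the paper writes (somewhat loosely) as $x_d=-x_d$, and your extra care in verifying condition (i) by expanding the determinant and in justifying the combinatorial recursion only strengthens the argument.
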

\begin{proof}
Clearly $b_n(\lambda;\mu)$ is uniquely determined by the following
recursion:
\begin{enumerate} \item[(i)] If $n=0$ then $b_n(\lambda;\mu)=
\chi(\mu=\lambda)$.

\item[(ii)] If $\mu_i=\mu_{i+1}$ for $1\le i\le d-1$, or if $\mu_d=0$, then $b_n(\lambda;\mu)
=0$.

\item[(iii)] If $n\ge 1$, then $b_n(\lambda;\mu) =\sum_{i=1}^d
b_{n-1}(\lambda;\mu-e_i)+b_{n-1}(\lambda;\mu+e_i) $.
\end{enumerate}

Denote by $\bar{b}_n(\lambda;\eta)=[x^\eta t^n]\, B_\lambda(x;t)$.
It suffices to show that $\bar{b}_n(\lambda;\mu)$ satisfies the same
recursion as for $b_n(\lambda;\mu)$ when $\mu$ belongs to the
closure of $W^d$. Condition (i) is straightforward; Condition (ii)
follows from the identities $B_\lambda(x;t)=-\left.
B_\lambda(x;t)\right|_{x_i=x_{i+1},x_{i+1}=x_i}$ for $1\le i\le
d-1$, and $B_\lambda(x;t)=-\left.
B_{\lambda}(x;t)\right|_{x_d=-x_d}$; Condition (iii) follows by
writing
$$B_\lambda(x;t)(1-t(x_1+x_1^{-1}+x_2+x_2^{-1}+\cdots +x_d+x_d^{-1}))=\det(x_i^{\lambda_j}-x_i^{-\lambda_j})_{1\le
i,j \le d} $$ and then equating coefficients.
\end{proof}

Now we are ready to complete our second task of this section.

\begin{proof}[Proof of Theorem \ref{t-Grabiner-Magyar}]
By Proposition \ref{p-Bxt}, it remains to extract the coefficient of
$x^\mu$ in $B_\lambda(x;t)$.

Since the numerator of $B_\lambda(x;t)$ is independent of $t$, it is
easy to obtain the exponential generating function
\begin{align*}
\sum_{n\ge 0} \sum_{\eta\in \ZZ^d} \bar{b}_n(\lambda;\eta)x^\eta
\frac{t^{n}}{n!}&= \det(x_i^{\lambda_j}-x_i^{-\lambda_j})_{1\le i,j
\le d}
\exp(t(x_1+x_1^{-1}
+\cdots +x_d+x_d^{-1}))\\
&=\det\left((x_i^{\lambda_j}-x_i^{-\lambda_j})\exp(t(x_i+x_i^{-1}))\right)_{1\le
i,j \le d}.
\end{align*}
Now taking the coefficients of $x_1^{\mu_1}\cdots x_d^{\mu_d}$
yields
$$\sum_{n\ge 0} b_n(\lambda;\mu)
\frac{t^n}{n!}=\det\left( [x_i^{\mu_i-\lambda_j}]
\exp(t(x_i+x_i^{-1}))-[x_i^{\mu_i+\lambda_j}]\exp(t(x_i+x_i^{-1}))
\right)_{1\le i,j\le d} ,$$ which is equivalent to
\eqref{e-starting-ending-walks}.
\end{proof}

Note that we use exponential generating functions because
$\exp(t(x_1+x_1^{-1}+\cdots+x_d+x_d^{-1}))$ factors nicely enough to
be put inside the determinant.

\section{Two Formulas Relating to Tableaux of Bounded Height}
In this section, we will prove Theorems \ref{t-main} and
\ref{t-Gessel}, where the former is a new result and the latter is
Gessel's remarkable determinant formula. We will first express our
objects as certain constant terms. Then we will play two tricks, the
\emph{Stanton-Stembridge trick} and the reverse of the
Stanton-Stembridge trick, in evaluating such constant terms.

\subsection{Stanton-Stembridge Trick}
Fix a working ring $\mathcal{K}$ that includes the ring
$\CC((x_1,\dots,x_d))$ of formal Laurent series as a subring. For
example, in our applications, the working ring is
$\mathcal{K}=\CC((x_1,\dots,x_d))[[t]]$. A permutation $\pi\in
\mathfrak{S}_d$ acts on elements of $\mathcal{K}$ by permuting the
$x$'s, or more precisely
$$\pi\cdot \sum_{i_1,\dots,i_d \in \ZZ} a_{i_1,\dots,i_d}x_1^{i_1}\cdots x_d^{i_d}=
\sum_{i_1,\dots,i_d \in \ZZ} a_{i_1,\dots,i_d}x_{\pi_1}^{i_1}\cdots
x_{\pi_d}^{i_d}. $$ We say that $\mathcal{K}$ is
$\mathfrak{S}_d$\emph{-invariant} if $\pi\cdot
\mathcal{K}=\mathcal{K}$ for any $\pi \in \mathfrak{S}_d$. For
example, the ring $\mathcal{K}=\CC((x_1,\dots,x_d))[[t]]$ is
$\mathfrak{S}_d$-invariant, but the field of iterated Laurent series
$\CC((x_1))((x_2))$ is not $\mathfrak{S}_2$-invariant (see
\cite{xin-iterate}).

In what follows, we always assume that $\mathcal{K}$ is
$\sss_d$-invariant. One can easily check that this condition holds
in our application.

\begin{lem}[Stanton-Stembridge trick]
For any $H(x_1,\dots,x_d)\in \mathcal{K} $, we have
$$\ct_{x_1,\dots,x_d} H(x_1,\dots,x_d)=\frac{1}{d!} \ct_{x_1,\dots,x_d}\sum_{\pi\in \mathfrak{S}_d} \pi\cdot H(x_1,\dots,x_d), $$
where $\ct_{x_1,\dots,x_d}$ means to take the constant term in the
$x$'s.
%
\end{lem}
The lemma obviously holds. The following direct consequence is
useful.

\begin{cor}
Suppose that $H, U, V\in \mathcal{K}$ and that
$U(x)=U(x_1,\dots,x_d)$ and $V(x)=V(x_1,\dots,x_d)$ are symmetric
and antisymmetric in the $x$'s, respectively. Then
\begin{align*}
\ct_{x_1,\dots,x_d} H(x_1,\dots,x_d)U(x)& =\frac{1}{d!}
\ct_{x_1,\dots,x_d}U(x)\sum_{\pi\in \mathfrak{S}_d}
\pi\cdot H(x_1,\dots,x_d),\\
\ct_{x_1,\dots,x_d} H(x_1,\dots,x_d)V(x)& =\frac{1}{d!}
\ct_{x_1,\dots,x_d}V(x)\sum_{\pi\in \mathfrak{S}_d}
(-1)^\pi \pi\cdot H(x_1,\dots,x_d).\\
\end{align*}
\end{cor}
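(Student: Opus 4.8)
The plan is to derive this corollary as an immediate application of the Stanton-Stembridge trick (the preceding lemma) together with the symmetry properties of $U$ and $V$. The key observation is that the constant term operator $\ct_{x_1,\dots,x_d}$ is invariant under any permutation of the $x$'s, since permuting variables does not change which monomial is the constant one; equivalently, $\ct_{x} (\pi\cdot G) = \ct_x G$ for every $\pi\in\sss_d$ and every $G\in\mathcal{K}$. This is really the content of the lemma, and both identities follow by applying it to the product $HU$ and the product $HV$ respectively, then pushing the action of $\pi$ through the factor that is (anti)symmetric.

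First I would treat the symmetric case. Apply the Stanton-Stembridge trick with $H$ replaced by the product $H(x)U(x)$, giving
\[
\ct_{x} H(x)U(x)=\frac{1}{d!}\ct_{x}\sum_{\pi\in\sss_d}\pi\cdot\bigl(H(x)U(x)\bigr).
\]
Since $\pi$ acts as a ring homomorphism (it merely relabels variables), it distributes over the product: $\pi\cdot(HU)=(\pi\cdot H)(\pi\cdot U)$. Now I invoke the hypothesis that $U$ is symmetric, so $\pi\cdot U(x)=U(x)$ for every $\pi$. Factoring the common $U(x)$ out of the sum yields exactly the first claimed identity.

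The antisymmetric case is parallel, the only change being the bookkeeping of signs. Applying the lemma to $H(x)V(x)$ and distributing the permutation action gives $\pi\cdot(HV)=(\pi\cdot H)(\pi\cdot V)$, and antisymmetry of $V$ means $\pi\cdot V(x)=(-1)^\pi V(x)$. Pulling out $V(x)$ then carries a factor $(-1)^\pi$ inside the sum, producing $\frac{1}{d!}\ct_x V(x)\sum_\pi (-1)^\pi\,\pi\cdot H(x)$ as required. There is no real obstacle here; the proof is entirely formal. The one point that must be checked—and which the $\sss_d$-invariance assumption on $\mathcal{K}$ guarantees—is that all the intermediate expressions (in particular each $\pi\cdot H$, and the products with $U$ or $V$) remain in $\mathcal{K}$, so that taking constant terms term by term is legitimate. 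Since the paper has already stipulated that $\mathcal{K}$ is $\sss_d$-invariant in all its applications, this is automatic.
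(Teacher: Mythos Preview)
Your proof is correct and is exactly the ``direct consequence'' the paper has in mind: apply the lemma to $HU$ (resp.\ $HV$), use that the $\sss_d$-action is multiplicative, and then invoke (anti)symmetry to factor out $U$ (resp.\ $V$ with the sign $(-1)^\pi$). The paper does not spell this out beyond calling it a direct consequence, so there is nothing to compare.
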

We call both the lemma and the corollary the Stanton-Stembridge
trick (SS-trick for short). See, e.g., \cite[p.~9]{Zeilberger-ASM}.

\subsection{Proof of Theorem \ref{t-main}}
First let us write $g_{\bdelta \mu}(t)$ as a constant term using
Theorem \ref{t-Grabiner-Magyar} and the fact that
$I_s(2t)=I_{-s}(2t)$.
\begin{align*}
g_{\bdelta \mu }(t) &=\det \left(\ct_{x_i}
\left[x_i^{\mu_i-\bdelta_j} \exp((x_i+x_i^{-1})t)
-x_i^{\mu_i+\bdelta_j} \exp((x_i+x_i^{-1})t)\right]\right)_{1\le
i,j\le d}
\end{align*} By factoring
out $x_i^{\mu_i}\exp((x_i+x_i^{-1})t)$ from the $i$th row, we obtain
\begin{align*}
g_{\bdelta \mu }(t)&=\ct_{{x}}  \det \left(
x_i^{-\bdelta_j}-x_i^{\bdelta_j} \right)_{1\le i,j\le d}
\prod_{i=1}^d x_i^{\mu_i}\exp((x_i+x_i^{-1})t).
 \end{align*}

Therefore, $G(t)$ can be expressed as a constant term in the $x$'s:
$$G(t)=\sum_{\mu\in W^d} g_{\bdelta \mu}(t)=\ct_{{x}}
\exp\Big(\sum_{i=1}^d(x_i+x_i^{-1})t\Big) \det \left(
x_i^{-\bdelta_j}-x_i^{\bdelta_j} \right)\cdot \sum_{\mu \in W^d}
x^\mu.
$$

Now we can apply the SS-trick to obtain
\begin{align} \label{e-G-step1}
G(t)=\frac{1}{d!}\ct_{ {x}} \exp\Big(\sum_{i=1}^d
\left(x_i+x_i^{-1}\right)t \Big) \det \left(
x_i^{-\bdelta_j}-x_i^{\bdelta_j} \right) \sum_{\mu \in W^d}
\sum_{\pi\in \sss_d}(-1)^\pi \pi\cdot x^\mu,
\end{align}
where
we used the fact that the first factor is symmetric and the second
factor is antisymmetric in the $x$'s.

The determinant is well-known to be equal to
\begin{align}
\label{e-subs-det} \det \left( x_i^{-\bdelta_j}-x_i^{\bdelta_j}
\right)_{1\le i,j\le d} &=\prod_{i=1}^d(1-x_i^2)\prod _{1\le i <
j\le d}(1-x_{{i}}x_{{j}})a_{\bdelta}(x^{-1}),
\end{align}
where $x^{-1}=(x_1^{-1},\dots,x_d^{-1})$, and the alternants are
related to the Schur functions as follows:
$$a_\mu(x):=\sum_{\pi\in \sss_d}(-1)^\pi \pi\cdot x_1^{\mu_1}\cdots x_d^{\mu_d}=a_{\bdelta}(x) s_{\mu-\bdelta}(x). $$
By the above formula, and the classical identity \cite[Equation
7.52]{EC2} for symmetric functions (by setting $x_{k}=0$ for $k>d$),
we obtain
\begin{align}\label{e-subs-sum}
\sum_{\mu \in W^d} \sum_{\pi\in \sss_d}(-1)^\pi \pi\cdot
x_1^{\mu_1}\cdots x_d^{\mu_d} &= a_{\bdelta}(x)
\frac{1}{\prod_{i=1}^d(1-x_i)\prod_{1\le i<j \le d}(1-x_ix_j) }.
\end{align}

Now substitute \eqref{e-subs-det} and \eqref{e-subs-sum} into
\eqref{e-G-step1}. After a lot of cancelations, we obtain:
\begin{align} \label{e-G-step2}
G(t)=\frac{1}{d!}\ct_{{x}} \exp\Big(\sum_{i=1}^d
\left(x_i+x_i^{-1}\right)t\Big)
\prod_{i=1}^d(1+x_i)a_{\bdelta}(x^{-1})\sum_{\pi\in \sss_d} (-1)^\pi
\pi\cdot x^{\bdelta}.
\end{align}

Now $a_{\bdelta}(x^{-1})$ is antisymmetric. Reversely applying the
SS-trick to \eqref{e-G-step2} gives
\begin{align*}
G(t)&=\ct_{{x}} x_1^{d}\cdots x_d^{1}a_{\bdelta}(x^{-1})
\prod_{i=1}^d\exp((x_i+x_i^{-1})t)
\prod_{i=1}^d(1+x_i) \\
&=\ct_{{x}} \det\left(x_i^{j-i} \right)_{1\le i,j\le d}
\prod_{i=1}^d\exp((x_i+x_i^{-1})t)
\prod_{i=1}^d(1+x_i) \\
&=\det \left( \ct_{x_i} x_i ^{j-i} (1+x_i)\exp((x_i+x_i^{-1})t)
\right)_{1\le i,j\le d},
\end{align*}
which is easily seen to be equivalent to \eqref{e-maintheorem}.

\subsection{Gessel's Determinant Formula}
The tricks for proving Theorem \ref{t-Gessel} are similar as in the
previous subsection. We remark that previous proofs of this result
rely on the powerful tools of symmetric functions. See, e.g.,
\cite{Gessel,Goulden}.

It follows from the RSK-correspondence that
$$u_d(n) =\sum_{|\alpha|=n} f^{\alpha}f^{\alpha}=\sum_{|\mu|=n+|\bdelta|,\mu \in W^d} f(\bdelta;\mu)^2, $$
where $\alpha$ ranges over partitions of $n$ of height at most $d$.
We will find a generating function of
$$u_d(\lambda;n):=\sum_{|\mu|=n+|\lambda|,\mu \in W^d}
f(\lambda;\mu)^2.
$$
More precisely, we have the following generalized form.
\begin{thm}
\label{t-Gen-Gessel} Let $\lambda\in W^d$ and let $I_s(2t)$ be as in
Theorem \ref{t-Grabiner-Magyar}. We have
\begin{equation}
\label{e-U-same} U_d(\lambda;t)=\sum_{n\ge 0} u_d(\lambda;n)
\frac{t^{2n}}{n!^2}=\det(I_{\lambda_i-\lambda_j})_{1\le i,j \le d}.
\end{equation}
\end{thm}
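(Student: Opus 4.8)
The plan is to follow the template of the proof of Theorem \ref{t-main}: first rewrite $U_d(\lambda;t)$ as a constant term in the $x$'s, then de-symmetrize it by the Stanton--Stembridge trick and read the entries off inside a determinant. By the RSK-correspondence we already have $u_d(\lambda;n)=\sum_{|\mu|=n+|\lambda|,\,\mu\in W^d} f(\lambda;\mu)^2$, so the factor $t^{2n}/n!^2$ in \eqref{e-U-same} is exactly what is needed to read $f(\lambda;\mu)^2$ off the diagonal of a product of two exponential generating functions. The only subtlety is that $f$ itself has no closed form, so I would work through the rational extension $\bar f$ of Proposition \ref{p-Fx} and let the reflection principle account for the chamber restriction.

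Concretely, I would introduce the exponential form of that extension,
$$A(x;t):=a_\lambda(x)\exp\Big(t\sum_{i=1}^d x_i\Big)=\sum_{\mu}\bar f(\lambda;\mu)\,\frac{t^{|\mu|-|\lambda|}}{(|\mu|-|\lambda|)!}\,x^\mu,$$
the point being that $(\sum_i x_i)^n$ is homogeneous, so for each fixed $\mu$ only the length $n=|\mu|-|\lambda|$ contributes. Then $\ct_x A(x;t)\,A(x^{-1};t)$ extracts the diagonal $\mu=\nu$ and equals $\sum_\mu \bar f(\lambda;\mu)^2\, t^{2(|\mu|-|\lambda|)}/\big((|\mu|-|\lambda|)!\big)^2$. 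I would next observe that $\bar F(x)=a_\lambda(x)/(1-\sum_i x_i)$ is antisymmetric, so $\bar f$ vanishes on the walls $\mu_i=\mu_{i+1}$ and satisfies $\bar f(\lambda;\pi\mu)=(-1)^\pi\bar f(\lambda;\mu)$; moreover a nonzero $\bar f(\lambda;\mu)$ forces $\mu=\pi\lambda+\beta$ with $\beta\in\NN^d$, hence all coordinates of $\mu$ are positive. Thus every nonzero term of the full-lattice sum has distinct positive coordinates, its $\mathfrak{S}_d$-orbit contributes $d!$ equal squares, and its sorted representative lies in the open chamber $W^d$, where $\bar f=f$ by Proposition \ref{p-Fx}. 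This folds the constant term into $d!\,U_d(\lambda;t)$, giving
$$U_d(\lambda;t)=\frac{1}{d!}\,\ct_x\,a_\lambda(x)\,a_\lambda(x^{-1})\,\exp\Big(t\sum_{i=1}^d(x_i+x_i^{-1})\Big).$$

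With this constant-term expression in hand the rest is bookkeeping. Since $\exp\big(t\sum_i(x_i+x_i^{-1})\big)$ is symmetric and $a_\lambda(x^{-1})$ is antisymmetric, their product is antisymmetric; writing $a_\lambda(x)=\sum_\pi(-1)^\pi\pi\cdot x^\lambda$ and applying the Stanton--Stembridge trick in reverse collapses the factor $1/d!$ and leaves
$$U_d(\lambda;t)=\ct_x\,x^\lambda\,a_\lambda(x^{-1})\,\exp\Big(t\sum_{i=1}^d(x_i+x_i^{-1})\Big).$$
Now $x^\lambda a_\lambda(x^{-1})=\det\big(x_i^{\lambda_i-\lambda_j}\big)$ after scaling row $i$ of $\det(x_i^{-\lambda_j})$ by $x_i^{\lambda_i}$, and since $\exp\big(t(x_i+x_i^{-1})\big)$ depends on $x_i$ alone it can be absorbed into row $i$. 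Pulling $\ct_x$ through the determinant expansion (each permutation term factors over the separate variables) turns the $(i,j)$ entry into $\ct_{x_i} x_i^{\lambda_i-\lambda_j}\exp\big(t(x_i+x_i^{-1})\big)=I_{\lambda_j-\lambda_i}(2t)=I_{\lambda_i-\lambda_j}(2t)$ by \eqref{e-Bessel} and $I_s=I_{-s}$, which is precisely \eqref{e-U-same}.

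The main obstacle is the folding step in the second paragraph: this is exactly where the reflection principle does its work. I would need to verify that $\bar f$ is genuinely supported on vectors with positive coordinates (so that sorted representatives land in $W^d$, not merely in its closure), that the wall terms vanish rather than contribute, and that each orbit has size exactly $d!$ with no overcounting from coincidences — all of which rest on the antisymmetry of $\bar F$ from Proposition \ref{p-Fx}. Once that constant-term identity is secured, the symmetrization and the final determinant manipulation are routine and parallel the proof of Theorem \ref{t-main}; note that, unlike that proof, the diagonal extraction produces $a_\lambda(x)a_\lambda(x^{-1})$ directly, so the classical symmetric-function identity \eqref{e-subs-sum} is not needed here.
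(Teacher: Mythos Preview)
Your argument is correct, and it takes a genuinely different route from the paper's own proof. The paper introduces \emph{two} sets of variables $x$ and $y$, writes $f(\lambda;\mu)\,t^n/n!$ as a constant term in each, and then sums $\sum_{\mu\in W^d} x^\mu y^\mu$; after the SS-trick in both sets this becomes $\sum_{\mu\in W^d} a_\mu(x)a_\mu(y)$, which is evaluated by the Cauchy--Binet identity \eqref{e-cauchy-binnet} to produce the determinant $\det\bigl(1/(1-x_iy_j)\bigr)$, and a final entrywise constant-term computation yields $I_{\lambda_i-\lambda_j}(2t)$. Your approach instead uses a \emph{single} set of variables together with $x\mapsto x^{-1}$: the diagonal extraction $\ct_x A(x;t)A(x^{-1};t)$ replaces the role of the two-variable pairing, and the antisymmetry of $\bar f$ (already supplied by Proposition~\ref{p-Fx}) replaces the Cauchy--Binet step by a direct orbit-folding argument. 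This is more elementary---no Cauchy identity is invoked, only the reflection-principle antisymmetry---and it reaches the constant-term expression $\frac{1}{d!}\ct_x a_\lambda(x)a_\lambda(x^{-1})\exp\bigl(t\sum(x_i+x_i^{-1})\bigr)$ in one stroke. The paper's two-variable approach, on the other hand, makes the passage to the skew generalization (Proposition~\ref{p-Gessel}, with distinct $\lambda$ and $\nu$) completely transparent, and it exposes the structural reason---the Cauchy kernel---why a determinant appears; your method would also handle that generalization (the product $\bar f(\lambda;\mu)\bar f(\nu;\mu)$ is still $\mathfrak{S}_d$-invariant), but the connection to the classical symmetric-function machinery is less visible. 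The folding step you flag as the main obstacle is indeed the crux, and your checks (positivity of coordinates via $\mu=\pi\lambda+\beta$ with $\beta\in\NN^d$, vanishing on walls, free orbits of size $d!$) are exactly what is needed.
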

Note that $f(\lambda;\mu)$ is the number of standard skew Young
tableaux of shape $(\mu-\bdelta)/(\lambda-\bdelta)$. See
\cite[Equation 7.7.1]{EC2} (note that there is a change of indices).
Therefore $u_d(\lambda;n)$ counts the number of pairs of standard
skew Young tableaux of the same shape
$(\mu-\bdelta)/(\lambda-\bdelta)$ with $|\mu|-|\lambda|=n$.

\begin{proof}[Proof of Theorem \ref{t-Gen-Gessel}]
By Proposition \ref{p-Fx},
$$f(\lambda;\mu)=[x^\mu]\;  a_\lambda(x) \sum_{k\ge 0} (x_1+x_2+\cdots+x_d)^k . $$
Note that when taking the coefficient in $x^\mu$, only the summand
with respect to $k=n$ has a contribution, where $n=|\mu|-|\lambda|$
is the length of the lattice walks. It follows that
\begin{align*}
f(\lambda;\mu)\frac{t^n}{n!} &=[x^\mu]\;  a_\lambda(x) \exp
\left((x_1+x_2+\cdots+x_d)t\right)\\
&=\det\left( [x_i^{\mu_i-\lambda_j}] \exp(tx_i) \right)_{1\le i,j
\le d}.
\end{align*}
When written in constant term, we obtain
\begin{align}
f(\lambda;\mu)\frac{t^n}{n!} &= \ct_{x} \det(x_i^{\lambda_j})_{1\le
i,j \le d} \prod_{i=1}^d x_i^{-\mu_i}\exp(tx_i)\nonumber \\ &=
\ct_{x} a_\lambda(x^{-1}) x_1^{\mu_1}\cdots x_d^{\mu_d}
\exp(t(x_1^{-1}+\cdots +x_d^{-1})), \label{e-gessel-step1}
\end{align}
where the last equality follows by substituting $x_i^{-1}$ for
$x_i$.

Now squaring both sides of \eqref{e-gessel-step1} and summing over
all $\mu$, we obtain
\begin{align}
U_d(\lambda;t)&= \sum_{\mu\in
W^d}f(\lambda;\mu)^2 \frac{t^{2n}}{n!^2} \nonumber \\
&=\sum_{\mu\in W^d} \ct_{x} a_\lambda(x^{-1}) x^\mu
\exp\Big(t\sum_{i=1}^d x_i^{-1}\Big)\cdot \ct_{y} a_\lambda(y^{-1})
y^{\mu}\exp\Big(t\sum_{i=1}^d y_i^{-1}\Big)\nonumber\\
&=\ct_{x,y} a_\lambda(x^{-1})a_\lambda(y^{-1})
\exp\Big(t\sum_{i=1}^d x_i^{-1}+t\sum_{i=1}^d
y_i^{-1}\Big)\sum_{\mu\in W^d} x^\mu y^\mu.\label{e-gessel-step2}
\end{align}

We need the following easy formula:
\begin{align} \label{e-gessel-alt}
\sum_{\pi,\sigma \in \sss_d} (-1)^\pi (\pi\cdot x^\mu) (-1)^{\sigma}
(\sigma\cdot y^{\mu}) = a_\mu(x) a_\mu(y),
\end{align}
and the well-known Cauchy-Binnet formula (see, e.g.,
\cite[p.~397]{EC2}):
\begin{align}
\label{e-cauchy-binnet} \sum_{\mu\in W^d} a_\mu(x)a_\mu(y) &=
x^{\mb{1}} y^{\mb{1}}\det \left( \frac{1}{1-x_iy_j} \right)_{1\le
i,j \le d},
\end{align}
where $\mb{1}$ is the vector of $d$ $1$'s and
$x^{\mb{1}}=x_1x_2\cdots x_d$.

Now apply the SS-trick to \eqref{e-gessel-step2} for the
$x$-variables and the $y$-variables separately, and then apply
\eqref{e-gessel-alt}. We obtain
\begin{align*}
U_d(\lambda;t) &= \frac{1}{d!^2} \ct_{x,y}
a_\lambda(x^{-1})a_\lambda(y^{-1}) \exp\Big(t\sum_{i=1}^d
x_i^{-1}+t\sum_{i=1}^d y_i^{-1}\Big)\sum_{\mu\in W^d} a_\mu(x)
a_\mu(y).
\end{align*}
Applying \eqref{e-cauchy-binnet} gives
\begin{align*}
U_d(\lambda;t)&=\frac{1}{d!^2} \ct_{x,y}
a_\lambda(x^{-1})a_\lambda(y^{-1}) \exp\Big(t\sum_{i=1}^d
\left(x_i^{-1}+y_i^{-1}\right)\Big) x^{\mb{1}} y^{\mb{1}} \det
\left( \frac{1}{1-x_iy_j} \right).
\end{align*}
Clearly, the last determinant is antisymmetric in the $x$-variables
and also in the $y$-variables. Reversely applying the SS-trick for
the $x$'s and for the $y$'s, we obtain
\begin{align}
U_d(\lambda;t)&=\ct_{x,y} x^{-\lambda} y^{-\lambda}
\exp\Big(t\sum_{i=1}^d \left(x_i^{-1}+y_i^{-1}\right)\Big)
x^{\mb{1}} y^{\mb{1}} \det \left(
\frac{1}{1-x_iy_j} \right)_{1\le i,j \le d}\nonumber\\
&=\ct_{x,y} \det \left(x_i^{1-\lambda_i}y_j^{1-\lambda_j}
\exp(tx_i^{-1}+ty_j^{-1}) \frac{1}{1-x_iy_j} \right)_{1\le i,j \le
d}\nonumber\\
&=\det \left(\ct_{x_i,y_j}x_i^{1-\lambda_i}y_j^{1-\lambda_j}
\exp(tx_i^{-1}+ty_j^{-1}) \frac{1}{1-x_iy_j} \right)_{1\le i,j \le
d}.\label{e-gessel-step3}
\end{align}
We finally need to evaluate the entries (constant terms) of the
above determinant.
\begin{align*}
\ct_{x_i,y_j} \exp(tx_i^{-1}+ty_j^{-1})
\frac{x_i^{1-\lambda_i}y_j^{1-\lambda_j}}{1-x_iy_j} &= \ct_{x_i,y_j}
\sum_{k,l\ge 0}
\frac{t^{k+l}}{k!\; l!}x_i^{-k}y_j^{-l}\sum_{m\ge 0}x_i^{1-\lambda_i+m}y_j^{1-\lambda_j+m}\\
&=\sum_{m\ge \lambda_i-1,\lambda_j-1}
\frac{t^{2-\lambda_i-\lambda_j+2m}}{(1-\lambda_i+m)!(1-\lambda_j+m)!}.
\end{align*}
By changing the indices $1-\lambda_i+m=n$, we obtain
\begin{align}
\ct_{x_i,y_j} \exp(tx_i^{-1}+ty_j^{-1})
\frac{x_i^{1-\lambda_i}y_j^{1-\lambda_j}}{1-x_iy_j} &=
I_{\lambda_i-\lambda_j}(2t).
\end{align}

This completes the proof.
\end{proof}

By going over the proof, one can see that the $a_\lambda(y)$ may be
replaced with $a_\nu(y)$ without making much difference. This gives
the following proposition.

Let $\lambda, \nu \in W^d$ with $|\lambda|\ge |\nu|$. Denote by
$$U_d(\lambda;\nu;t):=\sum_{n\ge 0} u_d(\lambda;\nu;n)\frac{t^{2n+|\lambda|-|\nu|}}{n! (n+|\lambda|-|\nu|)! }, $$
where
$$u_d(\lambda;\nu;n):=\sum_{|\mu|=n+|\lambda|,\mu \in W^d}
f(\lambda;\mu)f(\nu;\mu).
$$
\begin{prop}\label{p-Gessel}
For $\lambda, \nu \in W^d$ with $|\lambda|\ge |\nu|$, we have
$$U_d(\lambda;\nu;t)=\det(I_{\lambda_i-\nu_j})_{1\le i,j\le d}. $$
\end{prop}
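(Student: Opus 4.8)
The plan is to run the proof of Theorem~\ref{t-Gen-Gessel} essentially verbatim, breaking the symmetry between the two factors by replacing the second alternant $a_\lambda(y^{-1})$ with $a_\nu(y^{-1})$. The point to keep in mind throughout is that $f(\lambda;\mu)$ records a walk of length $n=|\mu|-|\lambda|$ while $f(\nu;\mu)$ records a walk of length $n'=|\mu|-|\nu|=n+|\lambda|-|\nu|$; this is exactly why the normalizing factorials in $U_d(\lambda;\nu;t)$ are $n!$ and $(n+|\lambda|-|\nu|)!$, and why we assume $|\lambda|\ge|\nu|$, so that $n'\ge 0$ for every $n\ge 0$ and the defining series is well formed. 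The product of the two normalized counts carries $t^{n+n'}=t^{2n+|\lambda|-|\nu|}$ over $n!\,n'!=n!\,(n+|\lambda|-|\nu|)!$, matching the summand of $U_d(\lambda;\nu;t)$.

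First I would invoke \eqref{e-gessel-step1} twice: once as stated to write $f(\lambda;\mu)\,t^n/n!=\ct_x a_\lambda(x^{-1})\,x^\mu\exp(t\sum_i x_i^{-1})$, and once with $\nu$ and the variables $y$ in place of $\lambda$ and $x$, giving $f(\nu;\mu)\,t^{n'}/n'!=\ct_y a_\nu(y^{-1})\,y^\mu\exp(t\sum_i y_i^{-1})$. Multiplying these and summing over $\mu\in W^d$ reproduces, exactly as in \eqref{e-gessel-step2}, the double constant term
\[
U_d(\lambda;\nu;t)=\ct_{x,y}\, a_\lambda(x^{-1})a_\nu(y^{-1})\exp\Big(t\sum_{i=1}^d(x_i^{-1}+y_i^{-1})\Big)\sum_{\mu\in W^d} x^\mu y^\mu.
\]

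Next I would apply the SS-trick to the $x$- and $y$-variables separately and then use \eqref{e-gessel-alt} followed by \eqref{e-cauchy-binnet}. The crucial observation, and the reason the generalization costs nothing, is that both \eqref{e-gessel-alt} and \eqref{e-cauchy-binnet} involve only the summation index $\mu$ and are completely blind to $\lambda$ and $\nu$; hence they transfer verbatim and convert $\sum_\mu x^\mu y^\mu$ into $x^{\mb{1}}y^{\mb{1}}\det(1/(1-x_iy_j))$. Since this Cauchy determinant is antisymmetric in the $x$'s and, separately, in the $y$'s, while $a_\lambda(x^{-1})=\sum_\pi(-1)^\pi\pi\cdot x^{-\lambda}$ and $a_\nu(y^{-1})=\sum_\sigma(-1)^\sigma\sigma\cdot y^{-\nu}$, I would reverse the SS-trick in each set of variables to collapse the alternants to the single monomials $x^{-\lambda}$ and $y^{-\nu}$; absorbing $x^{\mb{1}}y^{\mb{1}}$ and pulling the exponentials into the rows and columns yields the analogue of \eqref{e-gessel-step3},
\[
U_d(\lambda;\nu;t)=\det\Big(\ct_{x_i,y_j}\, x_i^{1-\lambda_i}y_j^{1-\nu_j}\exp(tx_i^{-1}+ty_j^{-1})\tfrac{1}{1-x_iy_j}\Big)_{1\le i,j\le d}.
\]

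It then remains only to evaluate the $(i,j)$ entry. Expanding the exponentials and the geometric series $1/(1-x_iy_j)=\sum_{m\ge 0}(x_iy_j)^m$ and matching powers forces the $x_i$-exponent $k=1-\lambda_i+m$ and the $y_j$-exponent $l=1-\nu_j+m$; substituting $n=1-\lambda_i+m$ rewrites the entry as $\sum_{n\ge 0} t^{2n+\lambda_i-\nu_j}/\big(n!\,(n+\lambda_i-\nu_j)!\big)=I_{\lambda_i-\nu_j}(2t)$, using the convention that a term with a negative factorial in the denominator vanishes. I do not expect a genuine obstacle here; the only real care is the bookkeeping of the two different walk lengths and checking that the shifted entry computation lands on a Bessel function of order $\lambda_i-\nu_j$ rather than $\lambda_i-\lambda_j$. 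The hypothesis $|\lambda|\ge|\nu|$ enters solely to keep the defining series of $U_d(\lambda;\nu;t)$ well formed; the resulting determinant identity is in fact symmetric under exchanging $\lambda$ and $\nu$, since $I_s=I_{-s}$ turns $\det(I_{\lambda_i-\nu_j})$ into its transpose.
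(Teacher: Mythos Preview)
Your proposal is correct and follows precisely the approach the paper indicates: the paper does not give a separate proof of Proposition~\ref{p-Gessel} but simply remarks that in the proof of Theorem~\ref{t-Gen-Gessel} one may replace $a_\lambda(y)$ by $a_\nu(y)$ ``without making much difference,'' which is exactly what you carry out in detail. Your additional remarks on the role of the hypothesis $|\lambda|\ge|\nu|$ and on the $\lambda\leftrightarrow\nu$ symmetry via $I_s=I_{-s}$ are correct and go slightly beyond what the paper states.
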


Gessel's determinant formula was proved by first deriving a
symmetric function identity, and then applying a specialization
operator. It is not a surprise that there should be a corresponding
symmetric function identity that specializes to Theorem
\ref{t-Gen-Gessel}, even to Proposition \ref{p-Gessel}. Actually,
such a formula was described by Gessel in the same paper
\cite{Gessel} in the paragraph just before Theorem 16, and was
clearly stated as \cite[Theorem 3.5]{yang}.

Gessel's determinant formula counts permutations of bounded length
of longest increasing subsequences. Does Theorem \ref{t-Gen-Gessel}
count natural objects?

%

\vspace{.2cm} \noindent{\bf Acknowledgments.} The author was
grateful to Richard Stanley for asking the question, which motivated
this paper, and for helpful suggestions. The author would also like
to thank Arthur Yang for going carefully over this paper and making
helpful remarks on an earlier draft. This work was supported by the
973 Project, the PCSIRT project of the Ministry of Education, the
Ministry of Science and Technology and the National Science
Foundation of China.

\end{document}